\newtheorem{theorem}{Theorem}
\newtheorem{lemma}{Lemma}
\newtheorem{conjecture}{Conjecture}
\newtheorem{corollary}{Corollary}
\begin{document}

\title{Global Cycle Properties in Graphs with Large Minimum Clustering Coefficient}
\author{Adam Borchert, Skylar Nicol, Ortrud R. Oellermann\footnote{Supported by an NSERC grant CANADA}\\
 \small{Department of Mathematics and Statistics}\\
 \small{University of Winnipeg, Winnipeg MB, CANADA}\\
 \small{adamdborchert@gmail.com; skylarnicol93@gmail.com; o.oellermann@uwinnipeg.ca}}
\date{}

\maketitle

\begin{abstract}
Let $\cal P$ be a graph property. A graph $G$ is said to be {\em locally} $\cal P$ ({\em closed locally} $\cal{P}$) if the subgraph induced by the open neighbourhood (closed neighbourhood, respectively) of every vertex in $G$ has property $\cal P$.  The {\em clustering coefficient} of a vertex is the proportion of pairs of its neighbours that are themselves neighbours. The {\em minimum clustering coefficient} of $G$ is the smallest clustering coefficient among all vertices of $G$. Let $H$ be a subgraph of a graph $G$ and let $S\subseteq V(H)$. We say that $H$ is a \textit{strongly induced subgraph} of $G$ with {\em attachment set} $S$, if $H$ is an induced subgraph of $G$ and the vertices of $V(H)-S$ are not incident with edges that are not in $H$. A graph $G$ is {\em fully cycle extendable}  if every vertex of $G$ lies in a triangle and for every nonhamiltonian cycle $C$ of $G$, there is a cycle of length $|V(C)|+1$ that contains the vertices of $C$. A complete characterization, of those locally connected graphs with minimum clustering coefficient 1/2 and maximum degree at most $6$ that are fully cycle extendable, is given in terms of forbidden strongly induced subgraphs (with specified attachment sets).  Moreover, it is shown that all locally connected graphs with $\Delta \le 6$ and sufficiently large minimum clustering coefficient are weakly pancylic, thereby proving Ryj\'{a}\v{c}ek's conjecture for this class of graphs. \\

\medskip

\noindent {\em Keywords}: minimum clustering coefficient; locally connected; strongly induced subgraphs; fully-cycle extendable; Ryj\'{a}\v{c}ek's conjecture\\
{\em AMS Subject Classification}: 05C38
\end{abstract}

\section{Introduction}

The evolution of the internet and the resulting large communication, information and social networks has significantly impacted recent advances in graph theory. The {\em clustering coefficient} of a user in a social network is the proportion of pairs of its friends that are themselves friends. The {\em mean clustering coefficient} for the network is the average clustering coefficient taken over all users. This concept was introduced by Watts and Strogatz \cite{WS98} to determine whether a given network is a `small world' network. It was reported in Ugander et. al \cite{UKBM11} that the Facebook graph is locally dense but globally sparse. Indeed the clustering coefficient of vertices of small degree is close to $1/2$ and steadily decreases with increasing degree. In this paper we focus on the global cycle structure of graphs that are locally sufficiently dense. Unless specified otherwise, all graphs under consideration are assumed to be connected and of order at least $3$.

For graph theory terminology not introduced here we follow \cite{bm}. Let $G$ be a graph and $v$ a vertex of $G$. Then the {\em neighbourhood} of $v$, denoted by $N(v)$, is the set of all vertices adjacent with $v$ and the closed neighbourhood of $v$ is the set $N[v] = N(v) \cup\{v\}$. If $S$ is a set of vertices in a graph, then the {\em subgraph induced} by $S$, denoted by $\langle S \rangle$, is the subgraph of $G$ with vertex set $S$ such that two vertices of $S$ are adjacent in $\langle S \rangle$ if and only if they are adjacent in $G$.  Let $\cal P$ be a graph property. A graph $G$ is said to be {\em locally}  $\cal P$  ({\em closed locally} $\mathcal{P}$), if the subgraph induced by the open neighbourhood (closed neighbourhood, respectively) of every vertex in $G$ has property $\cal P$.

The {\em clustering coefficient} of a vertex $v$ of degree $k \ge 2$, denoted by $\xi(v)$, is the ratio of the number of edges in the subgraph induced by $N(v)$ to the maximum number of edges in a $k$-vertex graph, i.e, $\xi(v) =|E(\langle N(v) \rangle)|/{k \choose 2}$. The {\em minimum clustering coefficient} of a graph $G$ with minimum degree $\delta(G) \ge 2$, denoted by $\xi(G)$, is the smallest clustering coefficient among all vertices of $G$. In the sequel we assume that all graphs under consideration have minimum degree at least $2$. If the minimum clustering coefficient of a class of graphs, is bounded by a constant $c$, $0 < c \le 1$, then these graphs are said to be {\em locally dense}. We observe that there are classes of graphs, although being locally dense, may be globally sparse with large diameter. For example, the class of graphs that are strong products (see \cite{IK}) of a path $P_n$ of order $n \ge 2$ and the complete graph $K_2$ of order $2$, denoted by $P_n \boxtimes K_2$, has minimum clustering coefficient at least $1/2$, is sparse since $|E(P_n \boxtimes K_2)|/{{2n} \choose 2} \rightarrow 0$  as $n \rightarrow \infty$ and has diameter $n-1$. In this paper we show that connected locally connected graphs with bounded maximum degree, for which the minimum clustering coefficient is at least 1/2, have a rich cycle structure.  In order to make these notions more precise we begin with pertinent definitions and relevant background on cycle properties in graphs that possess certain local properties.

Let $G$ be a graph of order $n$. Then $G$ is \emph{hamiltonian} if $G$ has a cycle of length $n$.  If, in addition, $G$ has a cycle of every length from 3 up to $n$, then $G$ is \emph{pancyclic}, see \cite{B}. An even stronger notion than pancyclicity is that of `full cycle extendability', introduced by Hendry \cite{H1}. A cycle $C$ in a graph $G$ is {\em extendable} if there exists a cycle $C'$ in $G$ that contains all the vertices of $C$ and one additional  vertex. The graph $G$ is {\em cycle extendable} if every nonhamiltonian cycle of $G$ is extendable. If, in addition, every vertex of $G$ lies on a 3-cycle, then $G$ is \emph{fully cycle extendable}. A condition weaker than pancyclicity has been the focus of a number of research articles.
 Let $g(G)$ and $c(G)$ denote, respectively, the length of a shortest cycle, called the {\em girth} of $G$, and a longest cycle, called the {\em circumference} of $G$. Then $G$ is called \emph{weakly pancyclic} if $G$ has a cycle of every length between $g(G)$ and $c(G)$. The problem of determining whether or not a graph has a hamiltonian cycle is called the {\em Hamilton Cycle Problem}.

In 1974, Chartrand and Pippert \cite{CP} initiated the study of locally connected graphs. Cycle properties of these graphs with bounded maximum degree have since been studied extensively - see for example \cite{AFOW13,ad,CGP,C,GOPS,K,H1,H2,OS,PS}. Another class of graphs that has been widely studied in relation to the Hamilton Cycle Problem, is the class of claw-free, i.e. $K_{1,3}$-free, graphs. These are precisely the graphs $G$ for which $\alpha(\langle N(v) \rangle ) \le 2$ for all $v \in V(G)$. Thus claw-free graphs can also be described in terms of a local property. The Hamilton Cycle Problem is NP-complete for both locally connected graphs, see \cite{GOPS}, and for claw-free graphs, see \cite{LCM}. The next result of Oberly and Sumner demonstrates the strength of combining these two local properties.

\begin{theorem} \label{oberly and sumner} \emph{\cite{OS}}
If $G$ is a connected, locally connected claw-free graph, then $G$ is hamiltonian.
\end{theorem}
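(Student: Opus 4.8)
The plan is to argue with a longest cycle and extract a contradiction from the two local hypotheses. First I would record the structural consequences that make this work. If some vertex $v$ had degree $1$, its unique neighbour $u$ would have $v$ as an isolated vertex of $\langle N(u)\rangle$ (unless $G=K_2$), contradicting local connectivity and $|V(G)|\ge 3$; hence $\delta(G)\ge 2$, so for every $v$ the graph $\langle N(v)\rangle$ is connected with at least two vertices, has an edge, and therefore $v$ lies on a triangle. Also $G$ is $2$-connected: a cut vertex $v$ would leave $G-v$ with at least two components, each meeting $N(v)$, and since vertices in different components are non-adjacent $\langle N(v)\rangle$ would be disconnected. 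In particular $G$ has a cycle, and it suffices to show that a longest cycle $C=v_1v_2\cdots v_kv_1$ of $G$ (indices mod $k$) is spanning; suppose it is not.

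Next I would examine how a component $D$ of $G-V(C)$ attaches to $C$. Put $A=N_G(V(D))\cap V(C)$; since $G$ is $2$-connected, $|A|\ge 2$. \emph{No two vertices of $A$ are consecutive on $C$}: if $v_i,v_{i+1}\in A$, pick $d_i,d_{i+1}\in V(D)$ with $d_i\sim v_i$ and $d_{i+1}\sim v_{i+1}$, take a path $Q$ from $d_i$ to $d_{i+1}$ in the connected graph $D$, and replace the edge $v_iv_{i+1}$ of $C$ by the path $v_i,Q,v_{i+1}$; the result is a cycle longer than $C$, a contradiction. Now fix $v_a\in A$ and $w\in V(D)$ with $w\sim v_a$. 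Then $v_{a-1},v_{a+1}\notin A$, so $w\not\sim v_{a-1}$ and $w\not\sim v_{a+1}$; as $\{w,v_{a-1},v_{a+1}\}\subseteq N(v_a)$ and $\langle N(v_a)\rangle$ has independence number at most $2$ by claw-freeness, we get $v_{a-1}\sim v_{a+1}$. Thus every attachment vertex of $D$ can be by-passed by a chord of $C$.

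Then I would use local connectivity at $v_a$ to force an extension of $C$. Since $\langle N(v_a)\rangle$ is connected, choose a shortest path $P$ from $w$ to $v_{a+1}$ inside it. A shortest path is induced, and a claw-free graph has no independent triple in any neighbourhood, so $P$ has length at most $3$; as $w\not\sim v_{a+1}$ its length is $2$ or $3$. Let $x$ be the penultimate vertex of $P$, so $x\in N(v_a)$, $x\sim v_{a+1}$, and $x\ne v_{a+1}$. If $x\notin V(C)$, then $x$ is off $C$ and adjacent to the two consecutive cycle vertices $v_a$ and $v_{a+1}$, so inserting $x$ between them gives a cycle of length $k+1$, contradicting maximality of $C$. \textbf{The remaining case is the main obstacle}: there $x\in V(C)$, so $P$ re-enters $C$; tracing it back to $w$ yields a chord of $v_a$ and (typically) a second attachment vertex, and no vertex is gained directly. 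To defeat this I would choose $C$ not just longest but, among longest cycles, extremal --- e.g.\ minimising $|V(D)|$ over components $D$ of $G-V(C)$, or minimising the length of a shortest arc of $C$ between two cyclically consecutive vertices of $A$ --- and then splice a longest path of $D$ joining neighbours of two consecutive attachment vertices into $C$ while short-circuiting, via the chords found above, the attachment vertices it no longer needs; this produces a cycle strictly longer than $C$, or one contradicting the extremal choice. Either way we contradict the choice of $C$, so $V(C)=V(G)$ and $G$ is Hamiltonian. I expect essentially all of the work to sit in this last, extremal step; everything before it is short local bookkeeping.
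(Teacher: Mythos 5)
There is a genuine gap. Note first that the paper does not prove this statement at all; it is quoted from Oberly and Sumner \cite{OS}, so there is no in-paper argument to compare against, and your proposal has to stand on its own. Its first two thirds are fine: $\delta(G)\ge 2$, $2$-connectivity, the fact that no two attachment vertices of a component $D$ of $G-V(C)$ are consecutive on a longest cycle $C$, and the chord $v_{a-1}\sim v_{a+1}$ forced by claw-freeness at each attachment vertex are all correct and are standard opening moves. The problem is exactly the case you flag as ``the main obstacle'': when the penultimate vertex $x$ of your path in $\langle N(v_a)\rangle$ lies on $C$, you do not actually produce a longer cycle or a contradiction; you only gesture at choosing $C$ extremal (minimising $|V(D)|$ or a shortest $A$-arc) and then ``splicing'' a path of $D$ into $C$ while ``short-circuiting'' attachment vertices via the chords. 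That step is where the whole theorem lives, and as described it does not work.

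Concretely: if you reroute $C$ through $D$ between two cyclically consecutive attachment vertices $v_a,v_b\in A$, the new closed walk omits the interior vertices of one $C$-arc from $v_a$ to $v_b$. Those interior vertices are, by construction, \emph{not} attachment vertices of $D$, so the only chords you have established ($v_{a-1}\sim v_{a+1}$ at attachment vertices) do nothing to reinsert them, and $v_a,v_b$ themselves remain on the new cycle, so the chords are not even the ones ``no longer needed''. Hence there is no reason the spliced cycle is longer than $C$, nor is it clear what it contradicts in your extremal choice; minimising $|V(D)|$ or the arc length does not obviously interact with the construction. The actual difficulty in Oberly--Sumner's argument is a careful analysis, inside $\langle N(v_a)\rangle$, of how the path from the off-cycle vertex $w$ meets cycle vertices, combined with claw-freeness to force an insertion of $w$ (or another off-cycle vertex) into $C$ --- not a rerouting of $C$ through $D$. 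Until you show precisely how the case $x\in V(C)$ yields either an extension of $C$ or a genuine violation of your extremal choice, the proof is incomplete at its decisive point.
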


Clark \cite{C} showed that connected, locally connected claw-free graph are in fact pancyclic and Hendry \cite{H1} observed that Clark had actually shown that these graphs are fully cycle extendable.
These results support Bondy's well-known `meta-conjecture' that almost any condition that guarantees that a graph has a Hamilton cycle actually guarantees much more about the cycle structure of the graph.

If, in Theorem \ref{oberly and sumner}, the claw-free condition is dropped, hamiltonicity is no longer guaranteed. In fact, Pareek and Skupi{\'e}n \cite{PS} observed that there exist infinitely many connected, locally hamiltonian graphs that are nonhamiltonian. However, Clark's result led Ryj\'{a}\v{c}ek to suspect that every locally connected graph has a rich cycle structure, even if it is not hamiltonian. He proposed the following conjecture (see \cite{WR}.)

 \begin{conjecture} \label{ryjacek}
 (Ryj\'{a}\v{c}ek) Every locally connected graph is weakly pancyclic.
 \end{conjecture}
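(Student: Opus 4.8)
The plan is to reduce the conjecture to a single cycle\nobreakdash-shortening step. First I would observe that the standing hypothesis $\delta(G)\ge 2$, combined with local connectivity, forces the girth to be small: for every vertex $v$ the set $N(v)$ has at least two vertices and $\langle N(v)\rangle$ is connected, hence contains an edge, so two neighbours of $v$ are adjacent and $v$ lies on a triangle. Thus $g(G)=3$, and weak pancyclicity is equivalent to the statement that $G$ contains a cycle of every length from $3$ up to its circumference $c(G)$. It therefore suffices to establish the following decrement lemma: if $G$ is locally connected and contains a cycle of length $k$ with $4\le k\le c(G)$, then $G$ contains a cycle of length $k-1$. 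Downward induction, starting from a longest cycle and applying the lemma to whatever cycle was produced at the previous stage, then delivers cycles of all lengths in the interval $[3,c(G)]$.

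For the decrement lemma, let $C=v_1v_2\cdots v_kv_1$ be a cycle of length $k\ge 4$, with indices read modulo $k$. The favourable situation is the presence of a \emph{short chord}, namely an edge $v_iv_{i+2}$: such a chord immediately yields a cycle of length $k-1$, obtained by replacing the subpath $v_iv_{i+1}v_{i+2}$ of $C$ with the single edge $v_iv_{i+2}$ and deleting $v_{i+1}$. So the whole task is to manufacture one short chord, and this is where local connectivity is meant to enter. Fixing any $v_i$, the induced subgraph $\langle N(v_i)\rangle$ is connected and contains the two cycle\nobreakdash-neighbours $v_{i-1}$ and $v_{i+1}$, so there is a $v_{i-1}$--$v_{i+1}$ path inside $N(v_i)$; if this path is the single edge $v_{i-1}v_{i+1}$ we have the desired short chord and are done.

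The difficulty is the global failure of this easy case, in which $C$ has no short chord at all, so that for \emph{every} $i$ the guaranteed $v_{i-1}$--$v_{i+1}$ path in $\langle N(v_i)\rangle$ is forced through off\nobreakdash-cycle common neighbours of $v_i$. Here local connectivity supplies only detours, and a detour through an off\nobreakdash-cycle vertex replaces $v_{i-1}v_iv_{i+1}$ by a strictly longer walk, so it tends to \emph{lengthen} $C$ rather than shorten it. The heart of the problem is to show that these detours, available at every vertex simultaneously, can always be recombined so as to net a decrease of exactly one in the cycle length. I expect this to be the main obstacle, and I believe it is genuinely hard in full generality: bounding the number and attachment pattern of the off\nobreakdash-cycle neighbours of each cycle vertex is exactly what makes the argument tractable in the theorem actually proved in this paper, where the hypotheses $\Delta(G)\le 6$ and large minimum clustering coefficient restrict $\langle N(v)\rangle$ enough to force a usable short chord. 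Without any degree or density bound the off\nobreakdash-cycle structure can be arbitrarily intricate, the detours can interact across a longest cycle in uncontrolled ways, and it is precisely this lack of local control that, in my assessment, stands between the present techniques and a proof of Ryj\'a\v{c}ek's conjecture as stated.
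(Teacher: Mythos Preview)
The statement you were asked to prove is labelled a \emph{conjecture} in the paper, and for good reason: Ryj\'a\v{c}ek's conjecture is open, and the paper makes no claim to settle it in general. What the paper actually establishes is the restricted instance treated in Corollary~\ref{weakly_pancyclic}, namely weak pancyclicity for locally connected graphs with $\Delta\le 6$ and minimum clustering coefficient at least $1/2$. There is therefore no ``paper's own proof'' of the full conjecture for your proposal to be compared against.

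Your write-up is candid about this. The observation that $g(G)=3$ under the standing hypothesis $\delta(G)\ge 2$ is correct, and the reduction to a decrement lemma (from a $k$-cycle produce a $(k-1)$-cycle) is a legitimate reformulation. But you have not proved the decrement lemma; you have only isolated the easy case of a short chord $v_iv_{i+2}$ and then explained why the remaining case---where every local $v_{i-1}$--$v_{i+1}$ path in $\langle N(v_i)\rangle$ is forced through off-cycle vertices---resists the method. That is an accurate diagnosis of the obstacle, not a resolution of it, so what you have is a discussion of the difficulty rather than a proof.

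For comparison, the paper's argument for the restricted class runs in the opposite direction from your downward induction. Instead of shortening cycles, Theorem~\ref{t1} shows that, apart from an explicit finite list of obstructions, every graph in the class is \emph{fully cycle extendable}, which already implies weak pancyclicity; the obstructions are then dispatched by an induction that deletes a degree-$2$ vertex. Every step of that analysis uses the bounds $\Delta\le 6$ and $\xi(G)\ge 1/2$ in an essential way, which corroborates your own assessment that present techniques do not reach the conjecture in full generality.
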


 Ryj\'{a}\v{c}ek's conjecture seems to be very difficult to settle, so it is natural to consider weaker conjectures. This conjecture has been studied, for example, for locally traceable and locally hamiltonian graphs with maximum degree at most 5 and 6, respectively, see \cite{AFOW13}, neither of which need to be hamiltonian, see \cite{ad}. One may well ask whether there are local connectedness conditions that guarantee (global) hamiltonicity. One such result was obtained by Hasratian and Kachatrian \cite{HK}. Recall that a graph $G$ of order $n \ge 3$ has the Ore property if $\deg u + \deg v \ge n$ for all pairs $u,v$ of non-adjacent vertices $u$ and $v$ of $G$ and it has the Dirac property if $\deg v \ge n/2$ for all $v \in V(G)$. It was shown in \cite{HK} that closed locally Ore graphs are hamiltonian and this result was subsequently strengthened in \cite{A} to full cycle extendability for these graphs. Thus, in particular, all closed locally Dirac graphs are fully cycle extendable. If a graph is closed locally Dirac, then the clustering coefficient of each vertex is at least $1/2$. This prompts the question: can the  closed locally Dirac condition can be weakened to requiring the graph to have minimum local clustering coefficient at least 1/2 while still guaranteeing hamiltonicity of the graph? We show in this paper that locally connected graphs with minimum clustering coefficient at least $1/2$ need not be hamiltonian. Nevertheless our results demonstrate that these graphs have a rich cycle structure and they lend support to Ryj\'{a}\v{c}ek's conjecture.

 The Hamilton Cycle Problem for graphs with small maximum degree remains difficult, even when additional structural properties are imposed on the graph. For example, the Hamilton Cycle Problem is NP-complete for bipartite planar graphs with $\Delta \leq 3$ (see \cite{ANS}), for $r$-regular graphs for any fixed $r$ (see \cite{P}) and even for planar cubic 3-connected claw-free graphs (see \cite{LCM}). However, some progress has been made for locally connected graphs with small maximum degree. The first result in this connection was obtained in \cite{CP} where it was shown that every connected, locally connected graph with maximum degree $\Delta (G) \le 4$  is either hamiltonian or isomorphic to the complete $3$-partite graph $K_{1,1,3}$. All connected, locally connected graphs with maximum degree at most $4$ are described in \cite{GOPS}. Apart from $K_{1,1,3}$, all of these graphs are actually fully cycle extendable. Since $K_{1,1,3}$ is weakly pancyclic,  Ryj\'{a}\v{c}ek's conjecture holds for locally connected graphs with maximum degree at most 4.

Global cycle properties of connected, locally connected graphs with maximum degree 5 were investigated in \cite{GOPS,H2,K}. Collectively  these results imply that every connected, locally connected graph with $\Delta =5$ and $\delta \geq 3$ is fully cycle extendable.

A graph $G$ is {\em locally isometric} if $\langle N(v) \rangle$ is an isometric subgraph (distance preserving subgraph) of $G$ for all $v \in V(G)$. It was shown in \cite{BNO14} that the Hamilton Cycle Problem is NP-complete even for locally isometric graphs with maximum degree $8$.  Nevertheless, it was shown in \cite{BNO14} that Ryj\'{a}\v{c}ek's conjecture holds for all locally isometric graphs with maximum degree 6 and without true twins (i.e., pairs of vertices having the same closed neighbourhood);  these graphs are in fact fully cycle extendable.

Let $x,y$ be vertices of a graph $G$ and $S$ a set of vertices of $G$. If $x$ is adjacent with $y$ (or every vertex of $S$) we write $x \sim y$ (or $x \sim S$, respectively). We use $x \nsim S$ to indicate that $x$ is not adjacent with any vertex of $S$.

\section{Cycle Structure in Graphs with Minimum Clustering Coefficient at least $\frac{1}{2}$}

In this section we obtain a structural characterization of those connected, locally connected graphs with minimum clustering coefficient $1/2$ and maximum degree at most 6 that are fully cycle extendable. To state this characterization we introduce some useful definitions. Let $H$ be a subgraph of a graph $G$ and let $S\subseteq V(H)$. We say that $H$ is a \textit{strongly induced subgraph} of $G$ with {\em attachment set} $S$ if $H$ is an induced subgraph of $G$ and the vertices of $V(H)-S$ are not incident with edges that do not belong to $H$, i.e., only the vertices of $S$ may be incident with edges of $G$ that are not in $H$.

Let $G$ be a connected, locally connected graph with minimum clustering coefficient $\xi(G) \ge 1/2$.  If $G$ has maximum degree $\Delta=2$ or $3$, then $G$ is isomorphic to $K_3$ or either $K_4$ or $K_4-e$, respectively (where $e$ is an edge of $K_4$). So $G$ is fully cycle extendable. If $\Delta = 4$, then it is readily seen that $G$ is fully cycle-extendable unless $G \cong K_2 + \overline{K}_3$.

For the remainder of this section we focus on graphs with $\Delta \ge 5$.  Let $C=v_0v_1v_2\ldots v_{t-1} v_0$ be a $t$-cycle in a graph $G$. If $i \ne j$ and $\{i,j\}\subseteq \{0,1,\ldots, t-1\}$, then $v_i\overrightarrow{C}v_j$ and $v_i\overleftarrow{C}v_j$ denote, respectively, the paths $v_iv_{i+1}\ldots v_j$ and $v_iv_{i-1}\ldots v_j$ (subscripts expressed modulo $t$).  Let $C=v_0 v_1, \ldots v_{t-1} v_1$ be a non-extendable cycle in a graph $G$. With reference to a given non-extendable cycle $C$, a vertex of $G$ will be called a \emph{cycle vertex} if it is on $C$, and an \emph{off-cycle }vertex if it is in $V(G)-V(C)$. A cycle vertex that is adjacent to an off-cycle vertex will be called an \emph{attachment vertex}. The following useful result was established in \cite{AFOW13}.

\begin{lemma} \label{l1} Let $C=v_0v_1\ldots v_{t-1} v_0$ be a non-extendable cycle of length $t$ in a graph $G$. Suppose $v_i$ and $v_j$ are two distinct attachment vertices of $C$ that have a common off-cycle neighbour $x$. Then the following hold. (All subscripts are expressed modulo $t$.)

\begin{itemize}
\item[\emph{1}.] $j\neq i+1$ and $j \ne i-1$.
\item[\emph{2}.] $v_{i+1} \nsim v_{j+1}$ and $v_{i-1} \nsim v_{j-1}$.

\item[\emph{3}.] If $v_{i-1} \sim v_{i+1}$, then $v_{j-1} \nsim v_i$ and $v_{j+1} \nsim v_i$.
\item[\emph{4}.] If $j=i+2$, then $v_{i+1}$ does not have two adjacent neighbours $v_k,v_{k+1}$ on the path  $v_{i+2}\overrightarrow{C}v_i$.
\end {itemize}
\end{lemma}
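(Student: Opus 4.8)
The plan is to argue each part by contradiction, in each case exhibiting an extension of the cycle $C$ (a cycle through $V(C)\cup\{x\}$, or through $V(C)\cup\{x\}$ with one original vertex replaced in a way that still gives a longer cycle), contradicting non-extendability. The common off-cycle neighbour $x$ is the pivot throughout: whenever we can route the cycle through $v_i$, then to $x$, then to $v_j$, and close up along an arc of $C$, we gain the vertex $x$. So the whole proof is a bookkeeping of which chords among the $v_{i\pm 1},v_{j\pm 1}$ are forbidden because the corresponding reroute would succeed.

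For part 1, if $j=i+1$ then $v_i x v_{i+1}$ together with $v_{i+1}\overrightarrow{C}v_i$ is a cycle of length $t+1$; the case $j=i-1$ is symmetric. For part 2, suppose $v_{i+1}\sim v_{j+1}$; then starting at $v_i$, go $v_i x v_j$, traverse $v_j\overleftarrow{C}v_{i+1}$, jump the chord $v_{i+1}v_{j+1}$, traverse $v_{j+1}\overrightarrow{C}v_i$, and close; this uses every cycle vertex plus $x$, so it has length $t+1$ — contradiction. The claim $v_{i-1}\nsim v_{j-1}$ is the mirror image (reverse the orientation of $C$). For part 3, assume $v_{i-1}\sim v_{i+1}$ and suppose, say, $v_{j-1}\sim v_i$. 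Then we can bypass $v_i$ on the cycle using the chord $v_{i-1}v_{i+1}$, freeing $v_i$ to be inserted elsewhere: route $\ldots v_{i-1}v_{i+1}\ldots v_{j-1} v_i x v_j \ldots$, i.e. go around $v_{i+1}\overrightarrow{C}v_{j-1}$, then $v_{j-1}v_i$, then $v_i x v_j$, then $v_j\overrightarrow{C}v_{i-1}$, then the chord $v_{i-1}v_{i+1}$ to close; this is a cycle on $V(C)\cup\{x\}$, contradiction. The subcase $v_{j+1}\sim v_i$ is handled by the reverse orientation. For part 4, with $j=i+2$: if $v_{i+1}$ had adjacent neighbours $v_k,v_{k+1}$ on $v_{i+2}\overrightarrow{C}v_i$, then we could detour from $v_{i+1}$ into that chord region: take $v_i x v_{i+2}$, traverse $v_{i+2}\overrightarrow{C}v_k$, jump to $v_{i+1}$ via $v_k v_{i+1}$, then $v_{i+1}v_{k+1}$, traverse $v_{k+1}\overrightarrow{C}v_i$, and close at $v_i$ — again a $(t+1)$-cycle using $x$. (One should double-check the degenerate positions, e.g. $k=i+2$ or $k+1=i$, but these only make the detour shorter to describe, not invalid.)

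The only genuinely delicate point is checking, in parts 2--4, that the arcs of $C$ being concatenated are internally disjoint and together cover all of $V(C)$ exactly once, so that the resulting closed walk is actually a cycle of length $t+1$ rather than something shorter; this is where the indices must be tracked modulo $t$ with care, and where part 1 is used implicitly (it guarantees $j\notin\{i-1,i+1\}$, so the arcs involved are nontrivial). I expect this edge-case verification of the reroutes — not any structural idea — to be the main chore. Since the argument is purely local to $C$ and $x$ and never invokes local connectedness or the clustering-coefficient hypothesis, the lemma holds for arbitrary graphs $G$, which is exactly the generality claimed.
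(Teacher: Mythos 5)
Your proof is correct: each of the four routes you describe does use every vertex of $V(C)\cup\{x\}$ exactly once with all required edges present (including the degenerate positions $j=i+2$ in part 3 and $k=i+2$ or $k+1=i$ in part 4), so each yields a $(t+1)$-cycle contradicting non-extendability. Note that the paper itself states Lemma~\ref{l1} without proof, citing \cite{AFOW13}; your argument is the standard rerouting proof one would expect there, so there is nothing further to compare.
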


\begin{figure}[h]
\begin{center}
\includegraphics*[width=5in]{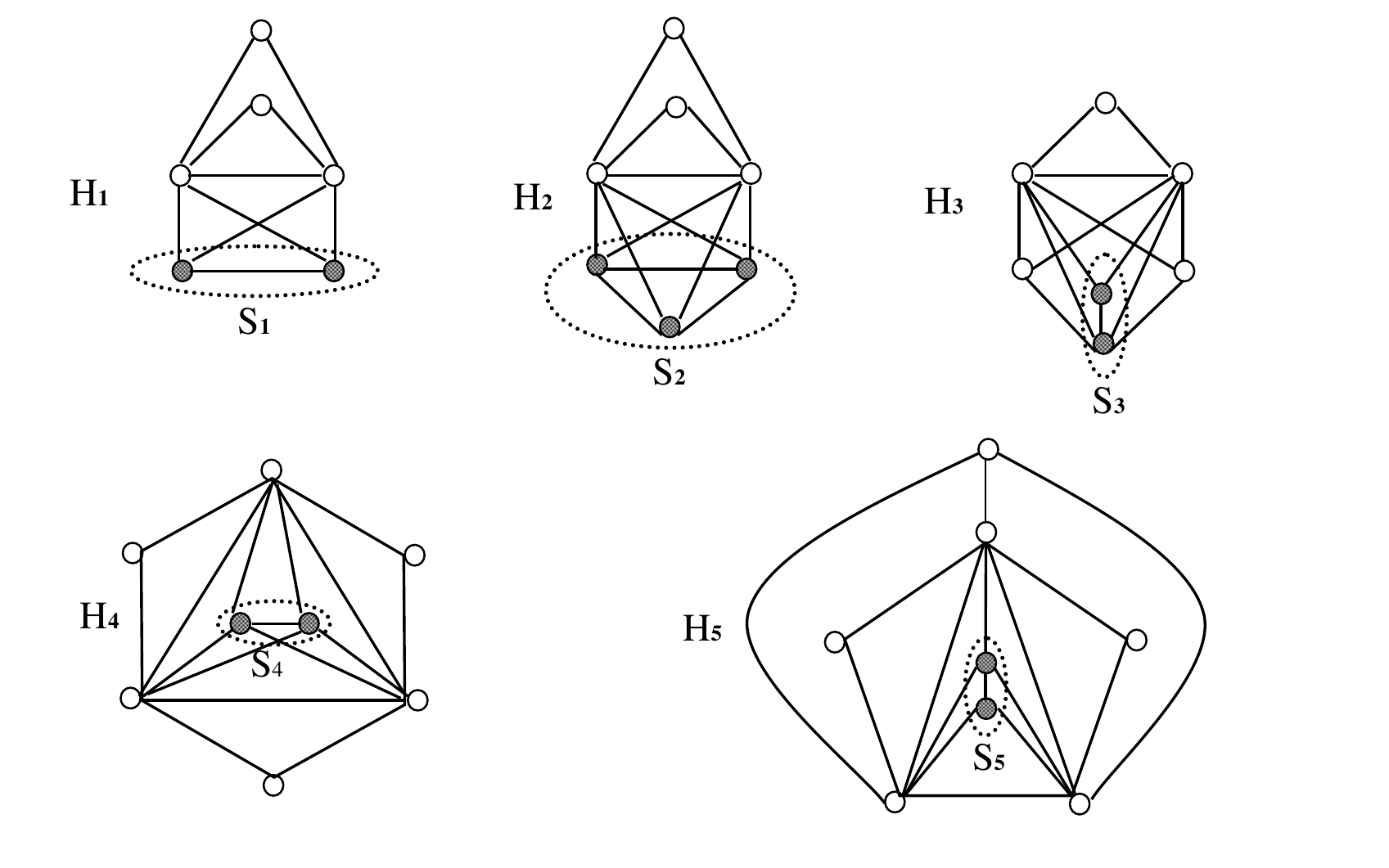}
\caption{Forbidden strong induced subgraphs $H_i$ with specified attachment sets $S_i$ for $1 \le i \le 5$}
\label{forbidden_strong_induced_subgraphs}
\end{center}
\end{figure}

The graphs of Figure \ref{forbidden_strong_induced_subgraphs}  show  forbidden strong induced subgraphs $H_i$ with specified attachment sets $S_i$, $1 \le i \le 5$.  Let $F_1$ be the graph $K_3 + \overline{K}_4$ and let $F_2$, $F_3$ and $F_4$ be the graphs shown in Figure \ref{forbidden_graphs}. The graphs $H_i$, $1 \le i \le 5$, and $F_i$, $1 \le i \le 4$, are referred to in Lemma \ref{l2} and Theorem \ref{t1}.

It is shown in Theorem \ref{t1} that a locally connected graph with minimum clustering coefficient $1/2$ and maximum degree 5 or 6 is fully cycle extendable if and only if it is not isomorphic to any of the graphs $F_i$, $1 \le i \le 4$, and does not contain any of the $H_j$s as strong induced subgraphs for $1 \le j \le 5$. Observe that if a connected graph cannot be isomorphic to a graph $F_i$, $1 \le i \le 4$, this is equivalent to saying that it cannot have $F_i$ as strong induced subgraph with empty attachment set.

\begin{figure}[h]
\begin{center}
\includegraphics*[width=4in]{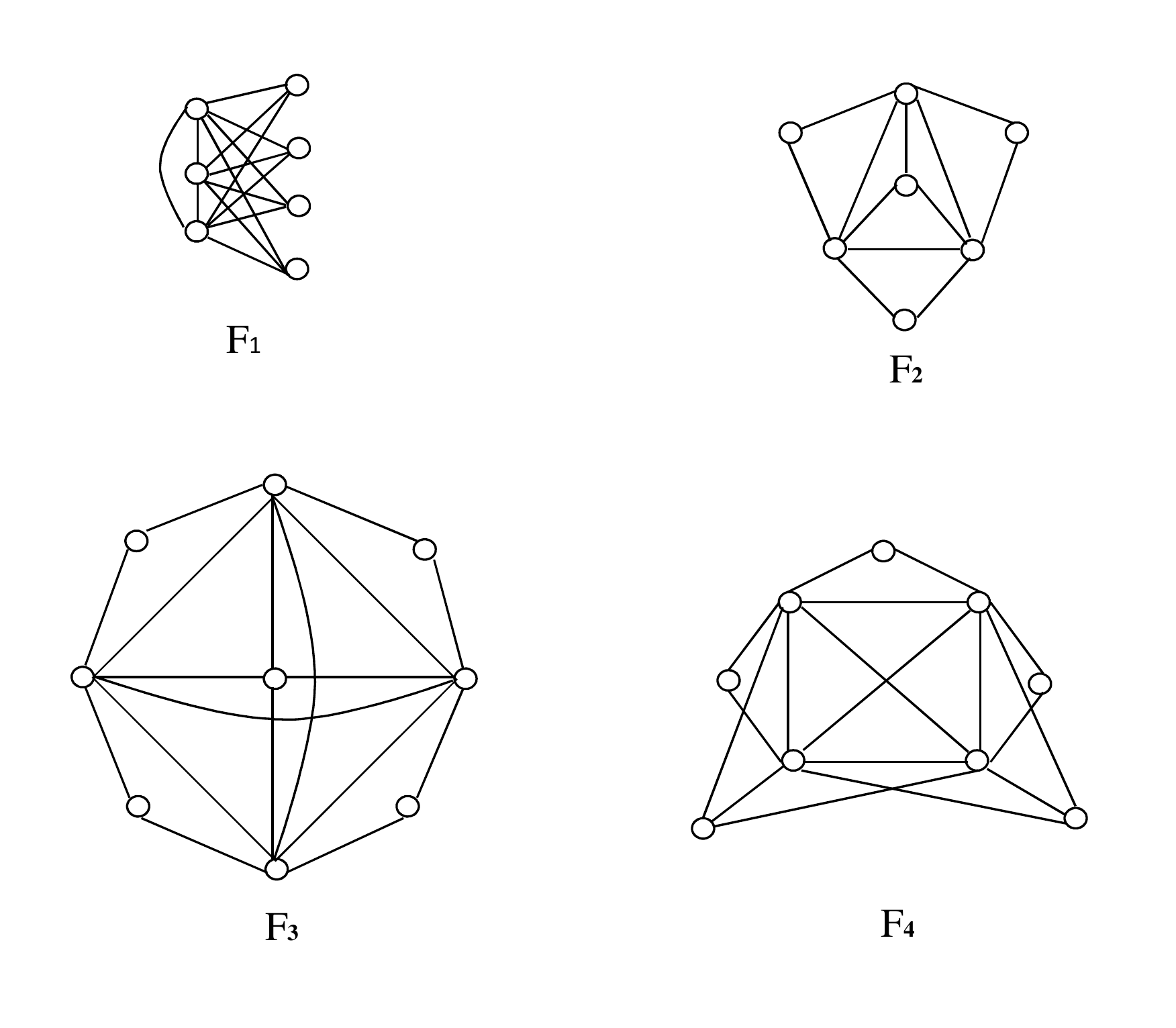}
\caption{Forbidden graphs}
\label{forbidden_graphs}
\end{center}
\end{figure}

\medskip

\begin{lemma}\label{l2}
Let $G$ be a connected locally connected graph with maximum degree $ \Delta \in\{5, 6\}$ and minimum clustering coefficient $\xi(G)\geq\frac{1}{2}$. Let $C=v_0v_1\ldots v_{t-1}v_0$ be a non-extendable, nonhamiltonian cycle in $G$ and let $v_0$ be an attachment vertex of largest possible degree. If $\deg(v_0)\geq 5$ and $v_0$ has a true twin, then $G$ contains $H_i$ as a strong induced subgraph with attachment sets $S_i$ for some $i $,  $1\le i \le 3$ as shown in Fig. \ref{forbidden_strong_induced_subgraphs}, or $G \cong F_1$.
\end{lemma}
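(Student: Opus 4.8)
The plan is to exploit the combination of three hypotheses — $\deg(v_0) \ge 5$, $v_0$ has a true twin $v_0'$ (so $N[v_0] = N[v_0']$), and $v_0$ lies on the non-extendable cycle $C$ with maximum degree among attachment vertices — together with the clustering-coefficient bound $\xi(G) \ge 1/2$ and Lemma~\ref{l1}. First I would fix the true twin $v_0'$ and note that $v_0' \sim v_0$, so $v_0'$ is either on $C$ or an off-cycle neighbour of $v_0$. If $v_0'$ is off-cycle, then $v_0'$ is an off-cycle vertex adjacent to $v_0$, hence $v_0$ is an attachment vertex (as assumed), and I can apply Lemma~\ref{l1} with $x = v_0'$ to any second attachment vertex $v_j$ sharing $v_0'$ as a neighbour; since $N[v_0'] = N[v_0]$, every cycle-neighbour of $v_0$ is also a neighbour of $v_0'$, which immediately forces $v_{-1}$ and $v_1$ (the two cycle-neighbours of $v_0$) to be attachment vertices with common off-cycle neighbour $v_0'$, and then parts 1–3 of Lemma~\ref{l1} pin down the adjacencies among $v_{-2}, v_{-1}, v_0, v_1, v_2$ very tightly.

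The heart of the argument is then a degree/edge count inside $\langle N(v_0)\rangle$. Because $\deg(v_0) = k \in \{5,6\}$ and $\xi(v_0) \ge 1/2$, the graph $\langle N(v_0)\rangle$ has at least $\lceil \binom{k}{2}/2\rceil$ edges — that is, at least $5$ edges when $k=5$ and at least $8$ when $k=6$. Meanwhile $N(v_0)$ contains $v_0'$, the cycle-neighbours $v_1$ and $v_{t-1}$, and (since $N[v_0']=N[v_0]$) $v_1,v_{t-1}\in N(v_0')$ as well. I would next observe that local connectedness of $G$ forces $\langle N(v_0)\rangle$ to be connected, and that the non-extendability constraints from Lemma~\ref{l1} forbid certain chords of $C$ at $v_0$; combining "many edges are required" with "several specific edges are forbidden" should leave only a short list of possible isomorphism types for $\langle N(v_0)\rangle$. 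For each surviving type I would reconstruct the induced subgraph on $N[v_0] \cup \{v_2, v_{t-2}\}$ (the relevant second-neighbourhood along $C$), check which vertices can carry edges leaving this set — only the attachment set vertices may, by the strongly-induced condition — and match the result against $H_1, H_2, H_3$ or recognize the whole graph as $F_1 = K_3 + \overline{K}_4$ (which arises exactly when $N(v_0)$ splits as a triangle plus an independent set of off-cycle vertices, all true twins).

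I would organize the case analysis by the value of $k = \deg(v_0)$ and by whether $v_0'$ is on the cycle or off it; the off-cycle case is the main one and the on-cycle case should reduce quickly (a true twin on $C$ adjacent to $v_0$ gives two consecutive high-degree cycle vertices with identical neighbourhoods, again severely constrained by Lemma~\ref{l1}(1)). Within each case, the forbidden-chord information from Lemma~\ref{l1}, parts~2–4, applied at $v_0$ and at $v_0'$ simultaneously (they have the same neighbours, so every conclusion doubles up), should collapse the possibilities. The main obstacle I anticipate is bookkeeping: ensuring the attachment sets come out exactly as the $S_i$ prescribed in Figure~\ref{forbidden_strong_induced_subgraphs} rather than merely identifying the abstract graph $H_i$ as an induced subgraph. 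Concretely, I must verify in each case that no vertex of $V(H_i) - S_i$ is incident with an edge outside $H_i$ — this requires using the maximality of $\deg(v_0)$ (to bound degrees of nearby cycle vertices) and the fact that any extra edge would either create a longer cycle containing $C$ (contradicting non-extendability) or violate $\Delta \le 6$. Getting that last verification airtight for all branches, while keeping the number of branches manageable, is where the real work lies.
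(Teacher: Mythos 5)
There is a genuine gap, and it begins with your organizing case split. You treat the case where the true twin is an off-cycle neighbour of $v_0$ as ``the main one,'' but that case is vacuous: if an off-cycle vertex $x$ satisfied $N[x]=N[v_0]$, then $x\sim v_1$, so the consecutive attachment vertices $v_0$ and $v_1$ would share the off-cycle neighbour $x$, which Lemma~\ref{l1}(1) forbids outright ($j\neq i\pm 1$); the paper disposes of this in one sentence. Symmetrically, your description of the on-cycle case as ``two consecutive high-degree cycle vertices with identical neighbourhoods'' that ``should reduce quickly'' conflates adjacency with consecutiveness: the twin cannot be $v_1$ or $v_{t-1}$ either (it would then be adjacent to the off-cycle neighbour $x$ of $v_0$, again contradicting Lemma~\ref{l1}(1)); it is a cycle vertex $v_i$ with $1<i<t-1$, adjacent to $v_0$ by a chord, and this is where the entire proof lives (one shows $i=2$, $j=3$, and when $\deg(v_0)=6$ with a single off-cycle neighbour the sixth neighbour $v_k$ forces a four-way case analysis according to its adjacencies with $\{v_1,x\}$). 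Your parenthetical claim that $F_1$ arises ``exactly when $N(v_0)$ splits as a triangle plus an independent set of off-cycle vertices, all true twins'' is also off: in the configuration that actually yields $F_1$, only one vertex of the independent set is off-cycle and the independent-set vertices are not twins of $v_0$.

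Second, even with the case split corrected, the sketch stops where the proof begins. The counts $|E(\langle N(v_0)\rangle)|\ge 5$ (for $\deg(v_0)=5$) and $\ge 8$ (for $\deg(v_0)=6$), together with the non-adjacencies from Lemma~\ref{l1}, do not by themselves ``leave only a short list of isomorphism types'': in the paper's argument most subcases are killed by exhibiting explicit extensions of $C$ (for instance $v_{t-1}v_3\overrightarrow{C}v_kv_1v_0xv_2v_{t-1}$ when $v_3\sim v_{t-1}$ in one branch), by degree and local-connectedness arguments at vertices such as $v_k$, $v_{k-1}$, $v_{k+1}$ that need not lie in your proposed window $N[v_0]\cup\{v_2,v_{t-2}\}$, and only then do $H_1$, $H_2$, $H_3$ with the prescribed attachment sets, or $F_1$, emerge. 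You correctly identify the final verification (that the non-attachment vertices have all their edges inside $H_i$, forced by $\Delta\le 6$, local connectedness and the maximality of $\deg(v_0)$) as essential, but none of the branch analysis that produces the candidate subgraphs is supplied, so the proposal as written does not establish the lemma.
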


\begin{proof}
First, assume $\deg(v_0)=5$. Since $\xi(v_0) \ge \frac{1}{2}$, it follows that $|E(\langle N(v_0) \rangle)| \ge 5$. Since $G$ is locally connected and no off-cycle neighbour of $v_0$ is adjacent with $v_1$ or $v_{t-1}$, by Lemma \ref{l1}(1), $v_0$ has either one or two off-cycle neighbours. Let $N(v_0)=\{x,v_1,v_i,v_j,v_{t-1}\}$ or $N(v_0)=\{x,y,v_1,v_i,v_{t-1}\}$, respectively, where $x$ and $y$ are off-cycle neighbours of $v_0$ and  $1<i<j<t-1$. By Lemma \ref{l1}(1), no off-cycle neighbour of $v_0$ can be a true twin of $v_0$. Assume $v_i$ is a true twin of $v_0$.

In the case where $v_0$ has one off-cycle neighbour, it follows that $i=2$ and $j=3$. By Lemmas \ref{l1}(1) and \ref{l1}(2), $x \not\sim\{v_1,v_{t-1},v_3\}$ and $v_{1} \not\sim \{v_{t-1}, v_3\}$. Since $\xi(v_0) \ge 1/2$, it follows that $v_3 \sim v_{t-1}$. Since $G$ is locally connected and $\deg(v_0)=\deg(v_2)=5$, it follows that $\deg(v_1) =\deg(x) =2$. Hence $G$ contains $H_1$ as a strong induced subgraph with attachment set $\{v_3,v_{t-1}\}$.

If $v_0$ has two off-cycle neighbours, $t=4$. By Lemmas \ref{l1}(1) and \ref{l1}(2), $\{x,y\} \not\sim \{v_1, v_{t-1}\}$ and $v_1 \nsim v_{t-1}$. Since $\xi(v_0) \ge 1/2$, we have $x \sim y$. As $\deg(v_0)=\deg(v_i)=5$ and $G$ is locally connected, $\deg(v_1)=\deg(v_{t-1})=2$. Hence $G$ has $H_1$ as strong induced subgraph with attachment set $\{x,y\}$.

Next, assume $\deg(v_0)=6$. Since $\xi(v_0)\geq\frac{1}{2}$, it follows, that $|E(\langle N(v_0) \rangle)| \ge 8$. It now follows from the assumption that $v_0$ has a true twin, and by Lemmas \ref{l1}(1), and \ref{l1}(2) that $v_0$ cannot have exactly two off-cycle neighbours; otherwise $\xi(G)< \frac{1}{2}$. Thus $v_0$ has either exactly one or exactly three off-cycle neighbours. Let $N(v_0)=\{x,v_1,v_i,v_j,v_k,v_{t-1}\}$ or $N(v_0)=\{x,y,z,v_1,v_i,v_{t-1}\}$, respectively, where $x,y$ and $z$ are off-cycle neighbours and $1<i<j<k<t-1$. If, in the first case, $v_j$ is a true twin of $v_0$, it follows from Lemmas \ref{l1}(1), \ref{l1}(2), and \ref{l1}(3), and the fact that $\Delta=6$ that $\xi(v_0)<\frac{1}{2}$. Hence, we may assume, in either case, that $v_i$ is a true twin of $v_0$.  If $v_0$ has exactly three off-cycle neighbours, then it follows since $G$ is locally connected and the facts that $\Delta=6$ and $\xi(v_0) \ge 1/2$ and by Lemmas \ref{l1}(1) and \ref{l1}(2), that $t=4$. Thus $G$ contains $H_2$ as a strong induced subgraph with attachment set $\{x,y,z\}$. Suppose now that $v_0$ has exactly one off-cycle neighbour. Since $\Delta =6$ and $v_0$ and $v_i$ are true twins, $i=2$ and $j=3$. By Lemmas \ref{l1}(1), \ref{l1}(2), $x \not\sim \{v_1,v_3,v_{t-1}\}$ and $v_1 \not\sim \{v_3,v_{t-1}\}$. We now consider four cases depending on which subset of $\{v_1,x\}$ the vertex $v_k$ is adjacent with.\\

\noindent{\bf Case 1}  $v_k \not\sim \{v_1,x\}$. Since $\xi(v_0) \ge 1/2$ it follows that in this case, $v_k \sim \{v_3,v_{t-1}\}$ and $v_3 \sim v_{t-1}$.  Since $\Delta = 6$, $v_0$ and $v_2$ do not have neighbours other than those already mentioned. Moreover, since $x$ and $v_i$ are adjacent with both $v_0$ and $v_{t-1}$, and  $G$ is locally connected, neither $x$ nor $v_1$ can have any other neighbours. Hence the subgraph induced by $\{v_0,v_1,v_2,v_3,v_k,v_{t-1},x\}$ is a strong induced subgraph isomorphic to $H_2$ and with attachment set $\{v_3,v_k,v_{t-1}\}$.\\

\noindent{\bf Case 2} $v_k \sim v_1$ and $v_k \not\sim x$. Then at least two of the three edges $v_kv_{t-1}, v_kv_3,$ $ v_3v_{t-1}$ belong to $G$. Suppose first that $v_k \sim \{v_3,v_{t-1}\}$. Since $\Delta = 6$, either $k=t-2$ or $k=j+1=4$. We may assume the former; the second case can be argued similarly. If $v_3 \sim v_{t-1}$, then $v_{t-1}v_3 \overrightarrow{C} v_kv_1v_0xv_2v_{t-1}$ is an extension of $C$. So $v_3 \not\sim v_{t-1}$.  Since $\Delta =6$ and  $G$ is locally connected $\deg(x) =2$. Also $\deg(v_1) = 3$, for if $v_1$ has a neighbour $y \not\in \{v_0,v_2,v_k\}$, then $y \sim v_k$.  Note, by Lemma \ref{l1}(2), $y \not\in \{v_3, v_{t-1}\}$. Thus $y$ is an off-cycle vertex. Since $\deg(v_k) \le 6$, $t=6$ and $k=4$ and $v_1yv_4v_3v_2v_5v_0v_1$ is an extension of $C$, which is not possible. We now show that $\deg(v_{t-1}) =3$. Suppose $v_{t-1}$ has a neighbour $y$ other than $v_0, v_2$ and $v_k$. Since $G$ is locally connected and $\Delta = 6$, $y \sim \{v_{t-1}, v_k\}$. By Lemma \ref{l1}(1), $y$ is not an off-cycle vertex. Thus $y = v_{k-1}$ (and by assumption $k-1 \ne 3$). But now $v_{t-1}v_{k-1} \overleftarrow{C}v_2xv_0v_1v_kv_{t-1}$ is an extension of $C$. Hence $\deg(v_{t-1})=3$. So $G$ contains $H_3$ as strong induced subgraph with attachment set $\{v_k, v_3\}$.

We now assume that $v_3 \sim v_{t-1}$ and, by the above observation, $v_k$ is adjacent with exactly one of $v_3$ or $v_{t-1}$, say $v_k \sim v_3$. The case where $v_k \sim v_{t-1}$ and $v_k \nsim v_3$ can be argued similarly. Since $v_k \nsim v_{t-1}$ it follows that $k \ne t-2$. If $v_{k+1} \sim v_3$, then $C$ can be extended to the cycle $v_{k+1}v_3\overrightarrow{C}v_kv_1v_2xv_0v_{t-1}\overleftarrow{C}v_{k+1}$. So $v_{k+1} \not\sim v_3$. Since $\Delta =6$ and by Lemma \ref{l1}(4) $v_{k+1} \not\sim \{v_0,v_1,v_2\}$.  By Lemma \ref{l1}(1) it follows that $v_k$ and $v_{k+1}$ do not have a common off-cycle neighbour. Since $\langle N(v_k) \rangle $ is connected it now follows that $v_{k+1} \sim v_{k-1}$ and hence that $v_{k-1} \ne v_3$.  But then $v_0xv_2v_1v_kv_3\overrightarrow{C}v_{k-1}v_{k+1}\overrightarrow{C}v_0$ is an extension of $C$ which is not possible.\\

\noindent{\bf Case 3} $v_k \sim x$ and $v_k \not\sim v_1$. Then at least two of the three edges $v_kv_{t-1}, v_kv_3,$ $v_3v_{t-1}$ belong to $G$. Suppose first that $v_k \sim \{v_3,v_{t-1} \}$. Then either $k=t-2$ or $k=j+1=4$. We consider the case where $k=t-2$. By the symmetry of the structure, the case where $k=j+1=4$ can be argued similarly. Since $k=t-2$, it follows from Lemma \ref{l1}(4) that $v_3 \not\sim v_{t-1}$.  If $k-1 \ne 3$, then it follows since $\langle N(v_k) \rangle $,  is connected and  $\Delta = 6$ and by Lemmas \ref{l1}(1) and \ref{l1}(2), that $v_{k-1} \not\sim \{x,v_0,v_2, v_{t-1}\}$ and thus $v_{k-1} \sim v_3$. Since $G$ is locally connected  it can now be shown that $\deg(v_1)=2$ and $\deg(v_{t-1})=\deg(x) =3$. So the subgraph induced by $\{x,v_0,v_1,v_2,v_3,v_k,v_{t-1}\}$ is a strong induced subgraph isomorphic to $H_3$ with attachment set $\{v_3,v_k\}$. Suppose now that $k-1=3$. So $t=6$. Since $v_4$ does not have a common off-cycle neighbour with either $v_3$ or $v_5$, $\deg(v_3)=\deg(v_5) =3$. Also $\deg(v_1) =2$ since $G$ is locally connected and $v_1 \not\sim \{v_3,v_4,v_5, x\}$. Hence $G$ contains the graph $H_3$ as strong induced subgraph with attachment set $\{v_4,x\}$.

Assume next that $v_k$ is not adjacent with both $v_3$ and $v_{t-1}$. Then $v_3 \sim v_{t-1}$ and $v_k$ is adjacent with exactly one of $v_3$ and $v_{t-1}$. We consider the case where $v_k \sim v_{t-1}$; the case where $v_k \sim v_3$ can be argued similarly. By Lemma \ref{l1}(4) $k \ne t-2$. By Lemma \ref{l1}(1) and the fact that $\Delta =6$, $\{v_{k-1}, v_{k+1}\} \nsim \{x,v_0,v_2\}$, and by Lemmas \ref{l1}(1) and \ref{l1}(2), $v_{k-1} \nsim \{v_{t-1}, v_{k+1}\}$. So $\xi(v_k) < 1/2$ which is not possible.\\

\noindent{\bf Case 4} $v_k \sim \{x, v_1\}$. Suppose first that $k=t-2$ or $k=j+1=4$. We consider the case where $k=t-2$; the case where $k=j+1=4$ can be argued similarly.  Then, by Lemma \ref{l1}(4), $v_3 \nsim v_{t-1}$. Suppose $k \ne 4$. Since $\Delta = 6$ and by Lemmas \ref{l1}(1) and \ref{l1}(2), $v_{k-1} \not\sim \{x,v_0,v_1,v_2,v_{t-1}\}$. Since $\langle N(v_k) \rangle $ is connected, this is not possible. Hence $k=4$. In this case it can be argued, using Lemma \ref{l1} and the fact that $\Delta =6$, that $\deg(v_1)=\deg(v_3)=\deg(v_5)=\deg(x)=3$. Thus $G$ is isomorphic to the graph $F_1$ which is not possible. Assume now that $k \not\in \{4, t-2\}$, Then by Lemmas \ref{l1}(1), \ref{l1}(4) and the fact that $\Delta=6$, we see that $\{v_{k-1}, v_{k+1}\} \nsim \{x, v_0, v_1, v_1\}$. So $\xi(v_k) < 1/2$, a contradiction.

\end{proof}

\begin{theorem}
\label{t1}
Let $G$ be a locally connected graph with maximum degree $\Delta=5$ or $6$ and minimum clustering coefficient $\xi(G)\geq\frac{1}{2}$. Then $G$ is fully cycle extendable if and only if $G$ is not isomorphic to $F_i$, $1 \le i \le 4$, and  $G$ does not contain the graphs $H_i$,  with attachment sets $S_i$, for $1 \le i \le 5$, as strong induced subgraphs.
\end{theorem}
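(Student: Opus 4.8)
The plan is to prove the two directions separately, with the forward (necessity) direction being the short one and the reverse (sufficiency) direction carrying the bulk of the work. For necessity, suppose $G$ is fully cycle extendable. If $G$ were isomorphic to one of $F_1,\dots,F_4$, a direct inspection shows each $F_i$ has a nonextendable nonhamiltonian cycle, so $G$ cannot be fully cycle extendable; likewise, for each $H_i$ one can exhibit a concrete cycle $C$ in any host graph $G$ (using that $S_i$ is the attachment set, so vertices of $V(H_i)-S_i$ have all their incidences inside $H_i$) which is nonextendable by Lemma \ref{l1}, contradicting full cycle extendability. So the task is to check, configuration by configuration, that the presence of any forbidden structure destroys full cycle extendability.

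For sufficiency, I would argue the contrapositive: assume $G$ is a locally connected graph with $\Delta\in\{5,6\}$, $\xi(G)\ge 1/2$, and $G$ is \emph{not} fully cycle extendable, and produce one of the forbidden structures. Since $G$ is locally connected with $\delta\ge 2$, every vertex lies on a triangle, so failure of full cycle extendability means there is a nonhamiltonian, non-extendable cycle $C=v_0v_1\ldots v_{t-1}v_0$; choose $C$ and then choose an attachment vertex $v_0$ of $C$ of largest possible degree (such a vertex exists since $C$ is nonhamiltonian and $G$ is connected, so some cycle vertex has an off-cycle neighbour). The main case split is on $\deg(v_0)$: if $\deg(v_0)\le 4$ one handles the small cases essentially by hand (recovering $F_1$-type obstructions or contradicting $\xi(G)\ge 1/2$ via the structure of $\langle N(v_0)\rangle$), and if $\deg(v_0)\ge 5$ one splits further according to whether $v_0$ has a true twin. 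The true-twin subcase is exactly Lemma \ref{l2}, which already delivers some $H_i$, $1\le i\le 3$, or $F_1$. So the real content is the remaining subcase: $\deg(v_0)\in\{5,6\}$ and $v_0$ has no true twin.

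In that subcase I would run the same kind of local analysis as in the proof of Lemma \ref{l2}: enumerate the possible neighbourhoods $N(v_0)$ (one or two off-cycle neighbours when $\deg(v_0)=5$; one, two, or three when $\deg(v_0)=6$), use $\xi(v_0)\ge 1/2$ to force a lower bound on $|E(\langle N(v_0)\rangle)|$, and use Lemma \ref{l1}(1)--(4) repeatedly to rule out chords between consecutive-index vertices and to constrain which short arcs of $C$ the neighbours of $v_0$ can reach. Each surviving configuration either yields an explicit extension of $C$ (contradicting non-extendability), or forces $\xi$ of some vertex below $1/2$ (contradiction), or — using local connectedness together with $\Delta\le 6$ to pin down the degrees of the low-degree vertices such as $v_1$, $v_{t-1}$, $x$, $y$ — exhibits precisely one of $H_4$, $H_5$ (or again $H_1,H_2,H_3$, or one of $F_2,F_3,F_4$) as a strongly induced subgraph with the prescribed attachment set. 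The key mechanism throughout is that once a vertex $u\in V(H_i)-S_i$ has been shown to have all its neighbours already inside the identified subgraph (which is where the $\Delta\le 6$ bound and the "largest degree attachment vertex" choice are used), $u$ contributes no edges leaving $H_i$, which is exactly what "strongly induced with attachment set $S_i$" requires.

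The main obstacle is the sheer combinatorial bookkeeping of the no-true-twin subcase: one must organize the enumeration of $N(v_0)$ and the adjacencies among $v_1, v_{t-1}$, the off-cycle vertices, and the remaining cycle-neighbours of $v_0$ so that every branch terminates cleanly in one of the three outcomes (extension of $C$, violation of $\xi\ge 1/2$, or a forbidden strongly induced subgraph), while showing that the forbidden structures $H_4, H_5, F_2, F_3, F_4$ are exactly what is needed to catch the branches that Lemma \ref{l2} does not already cover. A secondary subtlety is verifying, in each extracted subgraph, that the degree constraints forcing vertices of $V(H_i)-S_i$ to be "internal" really follow from $\Delta\le 6$, local connectedness, and the maximality of $\deg(v_0)$ among attachment vertices — this is the step most prone to a hidden gap, so I would be careful to state the degree of $v_0$ relative to each such vertex explicitly before concluding.
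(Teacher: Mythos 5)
Your outline does track the paper's strategy for the sufficiency direction: take a non-extendable nonhamiltonian cycle $C$ (which exists because local connectedness plus $\delta\ge 2$ puts every vertex on a triangle), pick an attachment vertex $v_0$ of maximum degree, dispose of the case where $v_0$ has a true twin via Lemma \ref{l2}, and then run a local analysis driven by Lemma \ref{l1} and $\xi\ge\frac12$ in which every branch must end in an extension of $C$, a vertex of clustering coefficient below $\frac12$, or one of the listed structures. But what you have written is a description of that proof, not the proof. The entire mathematical content of the theorem is the enumeration you defer with ``I would run the same kind of local analysis'': the verification that for $\deg(v_0)\in\{4,5,6\}$, with no true twin, each configuration of $N(v_0)$ really does terminate in one of the three outcomes, and in particular that $H_4$, $H_5$, $F_2$, $F_3$, $F_4$ are exactly the structures that arise in the branches Lemma \ref{l2} does not cover. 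In the paper this is Cases 1 and 2 together with Facts 1--9 — several pages of explicit cycle extensions, adjacency counts showing $\xi(v_q)<\frac12$, and degree computations (using $\Delta\le 6$, local connectedness, and the maximality of $\deg(v_0)$) that pin the internal vertices of each $H_i$ down to degree 2 or 3 so that the subgraph is strongly induced with the stated attachment set. Asserting that this bookkeeping ``terminates cleanly'' is precisely the claim that needs proof; as it stands, the completeness of the forbidden list is not established, so the sufficiency direction is a roadmap with the destination assumed.

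There is also a concrete flaw in your necessity direction: Lemma \ref{l1} cannot certify that a cycle is non-extendable — it derives consequences from an assumed non-extendable cycle, so invoking it to show non-extendability runs the implication backwards. The paper argues necessity differently and more robustly: for each $F_i$, and for each $H_i$ sitting in an arbitrary host $G$ with attachment set $S_i$, it exhibits a vertex cut $U$ (the high-degree vertices of the structure) such that $G-U$ has more components than $|U|$, because the vertices of $V(H_i)-S_i$ have all their edges inside $H_i$ and become isolated; hence $G$ is not hamiltonian, and a fully cycle extendable graph is hamiltonian. If you insist on exhibiting a non-extendable nonhamiltonian cycle instead, you must rule out extensions using vertices of $G$ outside $H_i$ attached through $S_i$, which requires exactly the control over external edges that the cutset argument sidesteps.
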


\begin{proof}
It is readily seen that if $G$ is one of the graphs $F_i$, $1 \le i \le 4$, then $G$ is not hamiltonian and hence not fully cycle extendable. To see this observe that each of these graphs contains a vertex cutset $U$ such that the number of components of $G-U$ exceeds $|U|$. For $F_1$, $F_3$, and $F_4$ the set $U$ consisting of the vertices of degree $6$ is such a cutset and for $F_2$ the set $U$ consisting of the vertices of degree 5 is such a vertex cutset. It follows that $G$ is not hamiltonian see \cite{bm, CLZ}. If $G$ contains $H_i$ as a strong induced subgraph where $i \in \{1,2,3,4,5\}$ with the  attachment sets as shown in Figure \ref{forbidden_strong_induced_subgraphs}, then we can again see that $G$ contains a vertex cutset $U$ such that the number of components of $G-U$ exceeds $|U|$, and hence $G$ is not hamiltonian. For $H_1$, let $U$ consist of the vertices having degree 5 in $H_1$, for $H_2$ let $U$ consist of the vertices having degree 6 in $H_2$, for $H_3$ let $U$ consist of the vertices having degree 6 and the vertex having degree 5 in $H_3$, for $H_4$ let $U$ consist of the three vertices having degree $6$ in $H_4$ and for $H_5$ let $U$ consist of the three vertices of degree 6 in $H_5$.

For the converse, assume that $G$ is not isomorphic to one of the graphs $F_i$ and that $G$ does not contain the graphs $H_i$ with attachement set $S_i$ as induced subgraphs, $1 \le i \le 5$.  Suppose, to the contrary, that $C = v_0v_1\ldots v_{t-1}v_0$ is a non-extendable cycle in $G$ for some $t < n$ (indices taken mod $t$). Since $t<n$ and $G$ is connected, there must be a vertex of $C$ that is an attachment vertex. We may assume that $v_0$ is an attachment vertex on $C$ of maximum degree. Let $x$ be an off-cycle neighbour of $v_0$. Since $G$ is locally connected and $x \nsim \{v_1, v_{t-1}\}$ it follows that $\deg(v_0) \ge 4$. Since  $\Delta = 6$, $\deg(v_0) \leq 6$.\\

\noindent\textbf{Case 1} Suppose $\deg(v_0) \leq 5$. Suppose first that $\deg(v_0) =4$ and let $N(v_0) = \{x, v_1, v_i, v_{t-1}\}$. Since $\xi(v_0) \geq \frac{1}{2}$, $\langle N(v_0)\rangle$ must contain three edges. By Lemma \ref{l1}(1) $x\nsim \{v_{t-1}, v_1\}$, so $v_i \sim x$ and $v_i$ is adjacent to at least one of $v_1$ and $v_{t-1}$, say $v_i\sim v_1$. If $v_{t-1}\sim v_1$, then by Lemma \ref{l1}(2), $i-1\neq 1$ and $i+1\neq t-1$. But then $\deg(v_i)\geq 5$, contradicting the fact that $v_0$ is an attachment vertex of maximum degree. So $v_{t-1}\nsim v_1$. Therefore $v_i\sim \{x, v_1,v_{t-1}\}$. Since $v_i$ has an off-cycle neighbour $x$, $\deg(v_i) \leq \deg(v_0)$ by our choice of $\deg(v_0)$. Hence $\deg(v_i) = 4$ and so $i-1=1$ and $i+1 = t-1$. Therefore $t=4$. By Lemmas \ref{l1}(1) and \ref{l1}(2), there are no additional edges joining pairs of vertices of $V(C) \cup \{x\}$. Suppose $G \neq \langle V(C) \cup \{x\}\rangle$. Then there exists a vertex $y \notin \langle V(C) \cup \{x\}\rangle$ adjacent with a vertex of $C$. Since G is a connected graph, $y$ is adjacent to at least one vertex of $V(C) \cup \{x\}$. By our choice of $v_0$, it follows that  $y\nsim \{v_0, v_2\}$, so $y$ is adjacent to at least one of $x, v_1, v_{t-1}$. If $y$ is adjacent with $v_1$ or $v_{t-1}$, say $v_1$, then it follows, since $\langle N(v_1) \rangle$ is connected, that some off-cycle neighbour of $v_1$ must also be adjacent with $v_0$ or $v_2$, contrary to Lemma \ref{l1}(1). So $y \not\sim \{v_1,v_{t-1}\}$. Thus $y \sim x$. Since $G$ is locally connected, $G$ has no cut-vertices. So some off-cycle vertex is adjacent with $x$ and at least one of $v_0$ and $v_2$. This contradicts our choice of $v_0$.

We next assume that $\deg(v_0) =5$. By our hypothesis and Lemma \ref{l2}, $v_0$ does not have a true twin. Since $G$ is locally connected, $G$ has at most two off-cycle neighbours. Suppose first that $v_0$ has two off-cycle neighbours $x$ and $y$. Let $N(v_0) =\{x,y,v_1,v_i,v_{t-1}\}$.  By Lemma \ref{l1}(1) $\{x,y\} \not\sim \{v_1,v_{t-1}\}$. Since $G$ is locally connected, $v_i$ must be adjacent with at least one of $x$ and $y$ and at least one of $v_1$ and $v_{t-1}$. Suppose first that $v_i \sim \{x,y\}$. If $v_i \sim \{v_1,v_{t-1}\}$, then $v_0$ and $v_i$ are true twins which is not possible. We may thus assume that $v_i \sim v_1$ and $v_i \not\sim v_{t-1}$. By our choice of $v_0$, $i=2$. Since $\xi(v_0) \ge 1/2$,  $v_1 \sim v_{t-1}$. By Lemma \ref{l1}(2) the latter is not possible.
Thus $v_i$ is adjacent with exactly one of $x$ or $y$, say $x$. By Lemma \ref{l1}(1) $\{x,y\} \not\sim \{v_1,v_{t-1}\}$ and by assumption $y \nsim v_i$. Since $\xi(v_0) \ge 1/2$, $v_i \sim \{v_0,v_1,v_{t-1}\}$, $x \sim y$ and $v_1 \sim v_{t-1}$. By Lemma \ref{l1}(2), $i \ne 2$ and $i \ne t-2$. So $\deg(v_i) \ge 6$, contrary to our choice of $v_0$. So this case cannot occur.

Thus all attachment vertices have exactly one off-cycle neighbour. In particular $v_0$ has exactly one off-cycle neighbour $x$ and four cycle neighbours $\{v_1,v_i,v_j,v_{t-1}\}$ where $i < j$. By Lemma \ref{l1}(1), $x \not\sim \{v_1,v_{t-1}\}$.  Since $G$ is locally connected, $x$ is adjacent with at least one of the two vertices $v_i$ and $v_j$, say $v_i$.

Suppose first that $v_1 \sim v_{t-1} $. By Lemma \ref{l1}(2), $i-1 \ne 1$. By Lemmas \ref{l1}(1) and \ref{l1}(2), $\{x, v_0\} \not\sim \{v_{i-1}, v_{i+1}\}$. Since $\xi(v_i) \ge 1/2$, we see that $\deg (v_i) \ge 5$. From our choice of $v_0$, $\deg (v_i) =5$.  So $v_i$ has a fifth neighbour $v_q$. Suppose $q=t-1$. By our choice of $v_0$ and by Lemma \ref{l1}(1), \ref{l1}(2) and \ref{l1}(3), $v_{i-1} \not\sim \{x, v_0, v_{i+1},v_{t-1}\}$. This contradicts the fact the $\langle N(v_i) \rangle$ is connected. So $q \ne t-1$. Similarly $q \ne 1$.

If $v_i \sim x$, then we can argue as for $v_i$ that $v_j \nsim \{v_1, v_{t-1}\}$. However, then there are six non-adjacent pairs in $\langle N(v_0) \rangle$, contrary to the fact that $\xi(v_0) \ge \frac{1}{2}$. So $v_j \nsim x$. Since $\xi(v_0) \ge \frac{1}{2}$, it follows that $v_j \sim \{v_1,v_i, v_{t-1}\}$. By Lemma \ref{l1}(2), $v_{i+1} \nsim v_1$. So $j \ne i+1$. Suppose $j =t-2$. Then $v_{i-1} \nsim v_{t-2}$; otherwise, $v_{t-2}v_{i-1} \overleftarrow{C} v_1v_{t-1}v_0xv_i \overrightarrow{C}v_{t-2}$ is an extension of $C$, a contradiction.  Since $x \nsim \{v_{i-1}, v_{i+1}, v_{t-2}\}$ and $v_0 \nsim \{v_{i-1}, v_{i+1}\}$ and $\xi(v_i) \ge \frac{1}{2}$, it follows that $v_{i-1} \sim v_{i+1}$. Thus $v_{t-2} \overleftarrow{C}v_{i+1}v_{i-1} \overleftarrow{C}v_1v_{t-1}v_0xv_iv_{t-2}$ is an extension of $C$, which is not possible. So $j \not\in \{i+1, t-2\}$. Since $\Delta \le 6$ we see that $\deg (v_i) =6$. Hence $v_{i-1} \nsim v_j$. We also know that $\{x, v_0\} \nsim \{v_{i-1}, v_{i+1}\}$. Since $\xi(v_i) \ge \frac{1}{2}$, it now follows that $v_{i-1} \sim v_{i+1}$ and $v_{i+1} \sim v_j$. Since $\deg (v_j)=6$, it follows that $j=i+2$. So $v_{i+2} \overrightarrow{C}v_{t-1}v_1 \overrightarrow{C}v_{i-1}v_{i+1}v_ixv_0v_{i+2}$ is an extension of $C$, a contradiction.
We conclude that $v_1 \not\sim v_{t-1}$.

Next suppose that $v_i \sim v_{t-1}$. Then, by our choice of $v_0$,   $N(v_i) =\{x,v_0,v_{i-1},$ $v_{i+1},v_{t-1}\}$. If $i-1 \ne 1$, then $v_{i-1} \nsim v_0$. By Lemma \ref{l1}(1), \ref{l1}(2) and \ref{l1}(3), $x \not\sim \{v_{t-1},v_{i-1},$ $v_{i+1}\}$, $v_{i-1} \nsim v_{t-1}$ and $v_{i-1} \nsim v_{i+1}$, contrary to the fact that $\xi(v_i) \ge 1/2$. So $i-1 = 1$. Since $\xi(v_i) \ge 1/2$, it follows that $v_{i+1} \sim \{v_0,v_{t-1}\}$. Thus $i+1=j$. Hence $v_i$ is a universal vertex in $N(v_0)$. By Lemma \ref{l2}, this is not possible. We conclude that
\begin{center}
$v_i\sim x$ implies $v_i\nsim v_{t-1}$, and similarly $v_j\sim x$ implies $v_j\nsim v_1$.
\end{center}

\noindent Suppose $v_j \sim x$. Since $\xi(v_0) \ge 1/2$, $v_i \sim \{v_1,v_j\}$ and $v_j \sim \{x, v_{t-1}\}$. By our choice of $v_0$, $i=2$ and $j=t-2$. If $j = i+1$, then $C$ is extendable. Hence $j \ne i+1$. By our choice of $v_0$ and Lemmas \ref{l1}(1) and \ref{l1}(2), $v_{i+1} \nsim\{x,v_0,v_1\}$. Since $\langle N(v_i) \rangle$ is connected $v_{i+1} \sim v_j$. By our choice of $v_0$, $j = i+2 =4$. So $t=6$. Since $G$ is locally connected and from our choice of $v_0$, no vertex of $\{x,v_0, \ldots, v_5\}$ is adjacent with a vertex not in this set. Hence $G$ is isomorphic to the forbidden graph $F_2$ which is not possible.
 Hence, if $v_i \sim x$, then $v_{t-1} \nsim \{v_{1},v_i \}$ and $v_j \nsim x$. Moreover, by Lemma \ref{l1}(1), $x \nsim \{v_1,v_{t-1}\}$. Since $\xi(v_0) \ge 1/2$ it follows that $v_j \sim \{v_1,v_i,v_{t-1}\}$ and $v_i \sim v_1$.  By Lemma \ref{l1}(2), $i+1 \ne j$. So $i=2$; otherwise, $\deg(v_i) >5$. We show next that either $j-1=i+1$ or $j+1 = t-1$; otherwise, $\xi(v_j) < 1/2$. Suppose $j-1 \ne i+1$ and $j+1 \ne t-1$. Since $\deg(v_0) =\deg(v_i) =5$ and by Lemma \ref{l1}(3), $\{v_{j-1}, v_{j+1}\} \nsim \{v_0,v_1,v_i\}$. Moreover, by the case we are in, $v_1 \nsim v_{t-1}$. Also $v_{j-1} \nsim v_{t-1}$; otherwise,  $v_0xv_2 \overrightarrow{C}v_{j-1}v_{t-1} \overleftarrow{C} v_jv_1v_0$ is an extension of $C$. Thus $\xi(v_j) < 1/2$, which is not possible.

Suppose now that $j-1=i+1=3$.  Suppose first that $t-1 \ne j+1(=5)$. We consider $N(v_4)=\{v_0,v_1, v_2, v_3, v_{t-1}, v_{5}\}$.
Since $\deg(v_0)=\deg(v_2) =5$ and by Lemma \ref{l1}(3), $v_{5} \nsim\{v_0, v_1, v_2\}$ also $v_{5} \nsim v_{3}$; otherwise, $v_0xv_2v_3v_5 \overrightarrow{C}v_{t-1}v_4v_1v_0$ is an extension of $C$. Since $\deg(v_0) = 5$ and by Lemma \ref{l1}(2) $v_{3} \nsim \{v_0,v_1\}$. By Lemma \ref{l1}(2) and the above, $v_{t-1} \nsim \{v_1, v_2\}$. So $\xi(v_4) < 1/2$.

Suppose next that $j-1=i+1=3$ and $t-1 = j+1 =5$. As observed, $x \nsim\{v_1,v_4,v_{t-1}\}$, $v_1 \nsim v_{t-1}$ and $v_2 \nsim v_{t-1}$. Thus, since $\xi(v_0) \ge 1/2$ we see that $v_j(=v_4) \sim\{v_1, v_2\}$. By Lemma \ref{l1}(2), $v_1 \nsim\{v_3, v_5\}$ and from the case we are in $v_2 \nsim v_5$ and $v_0 \nsim v_3$. Also $v_3 \nsim v_5$; otherwise, $v_0xv_2v_1v_4v_3v_5v_0$ is an extension of $C$. Since $v_4$ has five neighbours on $C$ and by our choice of $v_0$, vertex $v_4$ cannot have an off-cycle neighbour. So $\deg(v_4) = 5$. Since $\deg(v_0)=\deg(v_2)=\deg(v_4)=5$ and  $G$ is locally connected, $\deg(x)=\deg(v_3)=\deg(v_5)=2$ and $\deg(v_1) =3$. So $G$ is isomorphic to the graph $F_2$ which is not possible.

Suppose $j+1=t-1$ and $j-1 \ne i+1$. Since $\xi(v_0) \ge 1/2$, we can argue as before that $v_j \sim \{v_0,v_1, v_2, v_{j-1}, v_{t-1}\}$. By our choice of $v_0$, vertex $v_j$ has no off-cycle neighbours. By Lemma \ref{l1} and the case we are in we see that in $\langle N(v_2) \rangle$, $x \nsim \{v_1, v_{3}, v_j\}$, and $v_{3} \nsim \{v_0, v_1\}$. Since $\deg(v_2) = 5$, and $\xi(v_2) \ge 1/2$ we have $v_{3} \sim v_j$. Now $v_{j-1} \nsim v_{j+1}(=v_{t-1})$; otherwise, $v_0xv_2v_1v_jv_3 \overrightarrow{C}v_{j-1}v_{j+1}v_0$ is an extension of $C$. Using this observation, Lemma \ref{l1}, and the case we are in we see that in $\langle N(v_j) \rangle$, $v_{j-1} \nsim \{v_0, v_1, v_2, v_{t-1}\}$, $v_1 \nsim \{v_3, v_{t-1}\}$, $v_2 \nsim v_{t-1}$ and $v_0 \nsim v_3$. Hence $\xi(v_j) < 1/2$, a contradiction.  Hence this case cannot occur.\\

\noindent\textbf{Case 2} Assume that $\deg(v_0)=6$. Since $\xi(v_0) \ge 1/2$, it follows that $|E(\langle N(v_0) \rangle)| \ge 8$. Hence $\langle N(v_0) \rangle$ has at most seven non-adjacencies.\\

\noindent{\bf Subcase 2.1} Assume $v_0$ has three off-cycle neighbours $x,y$ and $z$. Then $v_0$ has exactly one cycle neighbour, $v_i$ say, distinct from $v_1$ and $v_{t-1}$.  By Lemma \ref{l1}(1), $\{v_1, v_{t-1}\} \nsim \{x,y,z\}$ and by Lemma \ref{l2}, $v_i$ is not a true-twin of $v_0$. Since $\xi(v_0) \ge 1/2$ it follows that $v_0$ is non-adjacent with exactly one neighbour of $v_0$ and $v_1 \sim v_{t-1}$. Since $\Delta = 6$ either $i=2$ or $i=t-2$. In either case it follows from Lemma \ref{l1}(2), that $C$ is extendable.  So this case cannot occur.\\

\noindent\textbf{Subcase 2.2} Assume $v_0$ has two off-cycle neighbours $x$ and $y$. Let $N(v_0)=\{x,y,v_1,v_i,$ $v_j,v_{t-1}\}$ where $1<i<j<t-1$. Since $\langle N(v_0)\rangle$ is connected, and by Lemma \ref{l1}(1), we may assume that $x\sim v_i$. Suppose that $v_i\sim v_1$. Assume first that $i-1\ne1$ and $i+1\ne j$.  Since $\deg(v_0)=6$,  it follows that $v_0, \nsim \{v_{i-1},v_{i+1} \}$. By Lemmas \ref{l1}(1), \ref{l1}(2) and \ref{l1}(3), $x \nsim \{v_1, v_{i-1}, v_{i+1}\}$ and $v_{i+1} \nsim \{v_1,v_{i-1}\}$. Since $\xi(v_i) \ge 1/2$ and $\Delta =6$ we have $\deg(v_i)=6$. So $v_i$ has a neighbour not in $\{x,v_0,v_1,v_{i-1}, v_{i+1}\}$. Since $\xi(v_i) \ge 1/2$, this  implies that such a neighbour is a true twin of $v_i$.  By Lemma \ref{l2} and our hypothesis, this is not possible.  Hence $i-1=1$ or $i+1=j$.

Assume $i+1=j$.  By Lemmas \ref{l1}(1) \ref{l1}(2), \ref{l1}(3), $\{x,y\} \nsim \{v_1,v_{t-1} \}$, $x \nsim v_j$, $v_j \nsim v_1$ and $v_1 \nsim v_{t-1}$.  Since $\xi(v_0) \ge 1/2$, it follows that $v_i$ is a true twin of $v_0$. By Lemma \ref{l2}, this is not possible.  We conclude that $i+1\ne j$ and $i-1=1$, i.e. $i=2$. So $v_0 \nsim v_{i+1}$.

Next, assume that $v_2\sim v_{t-1}$. By Lemmas \ref{l1}(1), \ref{l1}(2), and the above $x \nsim \{v_1,v_{3}, v_{t-1} \}$, $v_1 \nsim\{v_{2}, v_{t-1}\}$ and $v_0 \nsim v_{3}$.  Since $\xi(v_2) \ge 1/2$ and  $\Delta = 6$,  it follows that $\deg(v_2)=6$. The neighbour of $v_2$ not in $\{x,v_0,v_1,v_{3},v_{t-1}\}$ must be adjacent with at least four of the five vertices in this set. Hence such a neighbour must be on $C$. Let $v_q$ be such a neighbour of $v_2$. Since $v_2$ and $v_q$ are not twins, and $\xi(v_2)\geq\frac{1}{2}$, it follows that $v_{3}\sim v_{t-1}$ and $v_q$ has exactly one non-neighbour in $\langle N(v_2)\rangle$.

Suppose $q \ne j$. Thus $v_q \nsim v_0$. Consequently, $v_q \sim \{x,v_{t-1},v_1,v_{3}\}$. Since $\Delta=6$, either $q=t-2$ or $q=4$. Suppose $q=t-2$. By applying Lemmas \ref{l1}(1) and \ref{l1}(2) and using the fact that $\Delta = 6$ one can show that, $\xi(v_q)<\frac{1}{2}$. If $q=i+2$, then it can be argued that $\xi(v_2) < 1/2$.

Hence, $q=j$. Suppose $x\sim v_j(=v_q)$. If $j-1=i+1=3$, then by Lemmas \ref{l1}(1) and \ref{l1}(2) we see that $v_{3} \nsim\{x, v_0, v_1,v_{t-1}\}$, $v_{i-1}(=v_1) \nsim\{x,v_{t-1}\}$ and $x \nsim v_{t-1}$. By Lemma \ref{l2}, $v_4$ is not a true twin of $v_2$. Hence either $v_4 \nsim v_1$ or $v_4 \nsim v_{t-1}$. So $\xi(v_2) < 1/2$, a contradiction. So $j-1 \ne i+1$. Similarly $j+1 \ne t-1$. This implies that $\\deg(v_j)>6$, a contradiction. We conclude that $x\nsim v_j$, and moreover that $v_j\sim \{v_{t-1},v_0,v_1,v_i,v_{i+1}\}$. So either $j-1=i+1$ or $j+1=t-1$. Suppose $j =i+2=4$. From the case we are in and by Lemmas \ref{l1}(1) and \ref{l1}(2) we see that $x \nsim \{v_1, v_{3}, v_4, v_{t-1} \}$, $v_1 \nsim \{v_{3}, v_{t-1} \}$, and $v_0 \nsim v_{3}$. Since $\xi(v_2) \ge 1/2$, $v_{3} \sim v_{t-1}$. So, by Lemma \ref{l1}(2), $v_4 \nsim y$. Using this observation, Lemmas \ref {l1}(1) and \ref{l2} and the fact that $\Delta = 6$, we see that $\{x,y\} \nsim \{v_1,v_4, v_{t-1}\}$, $y \nsim v_2$ and $v_1 \nsim v_{t-1}$; contrary to the fact that $\xi(v_0) \ge 1/2$. Suppose $j =t-2$. Using Lemmas \ref{l1}(1) and \ref{l1}(2) and the above we see that $x \nsim \{v_1, v_{3}, v_{t-2}, v_{t-1}\}$, $v_1 \nsim \{v_{3}, v_{t-1}\}$ and $v_{3} \nsim  v_0$. Since $\xi(v_2) \ge 1/2$, it follows that $v_{3} \sim \{v_{t-1}, v_{t-2}\}$. So $v_{t-2}v_1v_0xv_2v_{t-1}v_{3} \overrightarrow{C}v_{t-2} $ is an extension of $C$, contrary to assumption. We conclude that $v_2 \nsim v_{t-1}$.

It follows from a similar argument that we cannot have $v_j\sim \{x,v_{t-1},v_1\}$.  As before we see that $\{x,y\} \nsim \{v_1, v_{t-1}\}$ and $v_{t-1} \nsim \{v_1,v_i\}$.  Since $\xi(v_0) \ge 1/2$ it follows that $v_j$ is adjacent with at least one of $x$ or $y$. Suppose $v_j \sim v_{t-1}$. We can argue as for $v_i$, that $j=t-2$ and that $v_j \nsim v_1$. So $\{x,y\} \nsim \{v_1, v_{t-1}\}$, $v_1 \nsim \{v_j, v_{t-1}\}$ and $v_2 \nsim v_{t-1}$. Since $\xi(v_0) \ge 1/2$ we have $\{x,y\} \sim \{v_2, v_{t-2}\}$ and $v_2 \sim v_{t-2}$. By Lemma \ref{l1}(1), $j \ne i+1=3$. Since $\Delta =6$ and by Lemmas \ref{l1}(1) and \ref {l1}(2),  we have $v_{3} \nsim\{x,y,v_0,v_1, v_{t-2}\}$. Thus $\langle N(v_i) \rangle$ is not connected, a contradiction. So $v_j \nsim v_{t-1}$. Since $v_{t-1} \nsim \{x,y,v_1,v_i,v_j\}$, $\langle N(v_0) \rangle$ is not connected, so this case cannot occur.

Hence $v_i \nsim v_1$ and $v_j \nsim v_{t-1}$. By Lemma \ref{l1}(1) we also know that $\{x,y\} \nsim \{v_1,v_{t-1} \}$. This accounts for six non-edges in $\langle N(v_0) \rangle$. Since $\xi(v_0) \ge 1/2$, there is thus at most one additional non-adjacency in $\langle N(v_0) \rangle$. So either $v_i \sim \{x,y,v_{t-1}\}$ or $v_j \sim \{x,y,v_1\}$, assume the former. Also $v_j$ is adjacent with at least one of $x$ or $y$. By Lemmas \ref{l1}(1), \ref{l1}(2) and \ref{l1}(3), $\{x,y\} \nsim \{v_{i-1},v_{i+1}, v_{t-1}\}$, $v_{i-1} \nsim v_{t-1}$ and $v_{i-1} \nsim v_{i+1}$. So $\xi(v_i) < 1/2$. Thus this case cannot occur.\\

\noindent\textbf{Subcase 2.3} Assume $v_0$ has exactly one off-cycle neighbour $x$. Let $N(v_0)=\{x,v_1,v_i,v_j,v_k,v_{t-1}\}$ where $1<i<j<k<t-1$. By Lemma \ref{l1}(1) $N(v_0) \cap N(x) \subseteq \{v_i,v_j,v_k\}$. To prove this case we  establishing several useful facts.

\noindent{\bf Fact 1} $x\sim v_i$ implies $v_i\nsim v_{t-1}$. Similarly  $x\sim v_k$ implies $v_k\nsim v_{1}$.\\
\noindent{\bf Proof of Fact 1} We prove the first of these two statements. The second statement can be proven similarly. Assume, to the contrary, that  $v_i \sim \{x,v_{t-1} \}$. \\
{\bf Case A}  $v_i \sim v_1$. If $i \ne 2$, then $\deg(v_i) =6$ and $N(v_i) = \{x,v_0,v_1, v_{i-1}, $ $v_{i+1}, v_{t-1} \}$. By Lemma \ref{l1}(1), $x \nsim \{v_1,v_{i-1}, v_{i+1}, v_{t-1} \}$ and by Lemmas \ref{l1}(2) and \ref{l1}(3), $v_{i+1} \nsim \{v_{i-1}, v_1 \}$ and $v_{i-1} \nsim v_{t-1}$. Since $ \xi(v_i) \ge 1/2$, it follows that $v_{i-1} \sim v_0$ which is not possible if $ i \ne 2$. Hence $i=2$.

\noindent{\bf Case A(1)}  $j \ne i+1(=3)$.  By Lemma \ref{l1}(1), $x \nsim \{v_1,v_{3},v_{t-1} \}$ and by Lemma \ref{l1}(2), $v_1 \nsim \{v_{3},v_{t-1} \}$. By assumption $v_{3} \nsim v_0$. Since $\xi(v_2) \ge 1/2$ it now follows that $\deg(v_2) =6$. If $v_{3} \nsim v_{t-1} $, then necessarily there is a neighbour of $v_2$ that is a true twin of $v_2$. By Lemma \ref{l2} this implies that $G$ contains a strong induced subgraph isomorphic to $H_l$ and with attachment set $S_l$ for some $1 \le l \le 3$, contrary to the hypothesis. So $v_{3} \sim v_{t-1}$. Moreover since $\xi(v_2) \ge 1/2$, the neighbour of $v_2$ that is not one of $x,v_0, v_{1}, v_{3},v_{t-1}$ must be adjacent with at least four of these vertices. So by Lemma \ref{l1}(1) this neighbour, is on $C$, call it $v_q$. So $v_q$ is adjacent with at least four vertices in $N(v_2)$. Since $\Delta=6$, $q=i+2=4$ or $q=t-2$. If $q=4$, then, by Lemma \ref{l1}(2), $v_q \nsim x$. So $v_q$ is adjacent with all vertices of $N[v_2] -\{x\}$. So $q=j=4$ and $v_q \sim v_1$.  Hence $v_0xv_2v_3v_{t-1} \overleftarrow{C}v_4v_1v_0$ is an extension of $C$, a contradiction. So $q \ne 4$.

Hence $q = t-2$. Since $v_3 \sim v_{t-1}$ it follows from Lemma \ref{l1}(2) that $x\nsim v_q$ and thus $\{v_{t-1},v_0,v_1,v_2,v_{3}\} \subseteq N(v_q)$. So $q=k=t-2$. This now yields the cycle extension $v_0xv_2v_1v_{t-2}\overleftarrow{C}v_{3}v_{t-1}v_0$ of $C$, which is not possible.

\noindent{\bf Case A(2)}  $j=i+1(=3)$. Assume that $v_3\nsim v_{t-1}$. Since $\xi(v_2)\geq\frac{1}{2}$, it now follows from Lemmas \ref{l1}(1), and \ref{l1}(2) that $v_2$ has a neighbour $v_q$ adjacent with four vertices in $N(v_2)$, by an argument similar to Case A(1). If $q=k$, then $v_0$ and $v_2$ are true twins which, by Lemma \ref{l2}, produces a contradiction. So $q \ne k$.  Hence, $v_q\nsim v_0$, and it follows that $ \{x,v_{t-1},v_1,v_2,v_{3}\} \subseteq N(v_q)$. Finally, since $\Delta=6$ and $q\ne k$, we must have exactly one of $q=t-2$ or $q=j+1=4$. In either case, we can argue as before, using the fact that $\Delta=6$ and Lemmas \ref{l1}(1) and \ref{l1}(2),  that $\xi(v_q)< 1/2$, which is not possible.

So $v_3\sim v_{t-1}$. By Lemma \ref{l2}, $v_2$ is not a true twin of $v_0$. So $v_2 \nsim v_k$. Since $\xi(v_0)\geq\frac{1}{2}$, it now follows from this observation and Lemmas \ref{l1}(1) and \ref{l1}(2) that $v_k$ is adjacent to three vertices in $\{x,v_{t-1},v_1,v_3\}$. Assume that $x\sim v_k$. It then follows from Lemma \ref{l1}(2) that $k-1\ne j(=3)$ and $k+1\ne t-1$. We conclude that $N(v_k)=\{x,v_{k-1},v_0,v_{k+1}\}\cup S$ where $S$ is some two-element subset of $\{v_{t-1},v_1,v_3\}$. One can argue using the fact that $\Delta=6$ and Lemmas \ref{l1}(1), \ref{l1}(2), and \ref{l1}(3),  that $\xi(v_k)<\frac{1}{2}$, for each possible choice of $S$. (For example suppose $S=\{v_1, v_{t-1} \}$. Then, $\{x, v_0, v_1\} \nsim \{v_{k-1}, v_{k+1}\}$, $v_{k-1} \nsim v_{k+1}$ and $x \nsim v_1$. So $\xi(v_k) < 1/2$.)

We conclude that $x\nsim v_k$, and by the above that $v_k\sim \{v_{t-1},v_1,v_3\}$. Observe that if $k+1=t-1$, or $k-1=j(=3)$, $C$ has the cycle extensions $v_0xv_2v_1v_k\overleftarrow{C}v_3v_{t-1}v_0$ and $v_2xv_0v_1v_k\overrightarrow{C}v_{t-1}v_3v_2$, respectively. Hence, $k+1\ne t-1$ and $k-1\ne j(=3)$. If $v_{k-1} \sim v_{k+1}$ or $v_{k-1}\sim v_{t-1}$, then $C$ has the extension $v_0xv_2v_1v_kv_{t-1}\overleftarrow{C}v_{k+1}v_{k-1}\overleftarrow{C}v_3v_0$ or $v_0xv_2v_1v_k\overrightarrow{C}v_{t-1}v_{k-1}\overleftarrow{C}v_3v_0$, respectively. So $v_{k-1} \nsim \{v_{k+1},v_{t-1}\}$. Since  $v_1 \sim v_k$, it follows from Lemma \ref{l1}(4) that $v_1 \nsim \{v_{k-1}, v_{k+1} \}$.  By Lemma \ref{l1}(2) $v_1 \nsim \{v_{t-1}, v_3\}$. Also since $\Delta = 6$ and $k+1 \ne t-1$ we have  $v_0 \nsim \{v_{k-1}, v_{k+1} \}$. Hence $\xi(v_k)< 1/2$, a contradiction.

\noindent{\bf Case B} $v_i\nsim v_1$. Suppose first that $j \ne i+1$. Then $v_0 \nsim \{v_{i-1}, v_{i+1}\}$. Moreover, by Lemma \ref{l1}(1), $x \nsim \{v_{i-1}, v_{i+1}, v_{t-1} \}$, by Lemma \ref{l1}(2) $v_{i-1} \nsim v_{t-1}$ and  by Lemma \ref{l1}(3) $v_{i-1} \nsim v_{i+1}$.  This accounts for seven non-edges in $\langle N(v_i) \rangle$. So $\deg(v_i) =6$. Since $\xi(v_i) \ge 1/2$  there is a vertex in $N(v_i)$ which is adjacent with all vertices in $N[v_i]$.  By Lemma \ref{l1}(1) this vertex is on $C$, call it $v_q$.  Thus $v_i$ and $v_q$ are true twins, contrary to Lemma \ref{l2}.

So $j = i+1$. As before, we observe that $x \nsim \{v_{t-1},v_{i-1}, v_{i+1}\}$,  $v_{i-1} \nsim v_{i+1}$ and $v_{i-1} \nsim v_{t-1}$. Also since $i > 2$ we have $v_0 \nsim v_{i-1}$. Since $\xi(v_i) \ge 1/2$ and $\Delta=6$, it follows that $\deg(v_i) =6$. Since $\xi(v_i) \ge 1/2$ and, by Lemma \ref{l2}, $v_i$ has no true twin, there is a neighbour $v_q$ of $v_i$ on $C$, where $q \not\in \{0,i-1,i+1,t-1\}$, such that $v_q$ is adjacent with exactly four of the vertices in $\{x,v_0,v_{i-1},v_{i+1}, v_{t-1}\}$. Since $v_{i-1} \nsim \{x,v_0,v_{i+1}, v_{t-1}\}$ and  $G$ is locally connected with $\Delta=6$, $v_{i-1} \sim v_q$. Also since $\xi(v_i) \ge 1/2$, we have $v_{i+1} \sim v_{t-1}$. We show next that $q \ne j+1(=i+2)$ and $ q \ne t-2$. If $q=j+1$, then $v_0xv_iv_{i+1}v_{t-1} \overleftarrow{C}v_qv_{i-1} \overleftarrow{C}v_0$ is an extension of $C$. If $q=t-2$, then $v_0xv_iv_{t-1}v_{i+1}\overrightarrow{C}v_qv_{i-1}\overleftarrow{C} v_0$ is an extension of $C$. Since $\Delta =6$, we see that $v_q$ does not lie on the path $v_{j+2} \overrightarrow{C}v_{t-3}$. So $q \ne k$ and $v_q$ is on the path $v_2 \overrightarrow{C}v_{i-2}$. Hence $v_q \nsim v_0$. So $v_q \sim \{x,v_i,v_{i-1}, v_{i+1}, v_{t-1}, v_{q-1}, v_{q+1}\}$. Since $\Delta =6$, $q = i-2$. As before we see that $x \nsim \{v_{t-1},v_{q-1}, v_{q+1}(=v_{i-1}), v_j\}$, $v_{i-1} \nsim v_j$, $v_i \nsim v_{q-1}$ and $v_{t-1}\nsim \{v_{i-1}, v_{q-1}\}$. So $\xi(v_q) < 1/2$, a contradiction. This completes the proof of Fact 1. $\Box$

\medskip

\noindent{\bf Fact 2} $x \sim v_j$ implies $v_j \nsim \{v_{1}, v_{t-1}\}$. \\
{\bf Proof of Fact 2} Assume, to the contrary, that $v_j \sim \{x,v_1\}$ or $v_j \sim \{x, v_{t-1}\}$, say the former. \\
{\bf Case A}  $v_i,v_j$ and $v_k$ are consecutive on $C$, i.e., $j-1=i$ and $j+1=k$. \\
By Lemma \ref{l1}(1), $x \nsim \{v_1,v_i,v_k,v_{t-1}\}$, by Lemma \ref{l1}(2), $v_i \nsim v_{t-1}$ and $v_k \nsim v_1$ and by Lemma \ref{l1}(3), $v_1 \nsim v_{t-1}$. Since $\xi(v_0) \ge 1/2$ all remaining pairs of vertices in $N(v_0)$ are adjacent. So $v_i \sim v_k$. This is not possible by Lemma \ref{l1}(3).
So $v_i,v_j$ and $v_k$ cannot be consecutive on $C$.

\noindent{\bf Case B} No two of the vertices $v_i$, $v_j$ and $v_k$ are consecutive on $C$, i.e.,  $j-1 \ne i$ and $j+1 \ne k$. \\
By Lemma \ref{l1}(1), $x \nsim \{v_1,v_{j-1},v_{j+1}\}$, by Lemma \ref{l1}(2), $v_{j+1} \nsim v_1$, by Lemma \ref{l1}(3), $v_{j-1} \nsim v_{j+1}$ and since $\Delta = 6$, $v_0 \nsim \{v_{j-1}, v_{j+1} \}$. Since $\xi(v_j) \ge 1/2$ it follows that $\deg(v_j)=6$ and that there is a vertex in $N(v_j) -\{x,v_0, v_1,v_{i-1},v_{i+1} \}$ that is necessarily a true twin of $v_j$. By Lemma \ref{l2} this is not possible.

\noindent{\bf Case C} Exactly two of the vertices $v_i$, $v_j$ and $v_k$ are consecutive on $C$. \\
{\bf Case C(1)} Assume  that $j-1=i$.  If we consider $N(v_j)$ and use Lemma \ref{l1} and the fact that $\Delta =6$, we see that $x \nsim \{v_1,v_i,v_{j+1}\}$ and $v_{j+1} \nsim \{v_0, v_1,v_i\}$. Since $\xi(v_j) \ge 1/2$, $v_j$ has a  neighbour $v_q \not\in \{x,v_0,v_1,v_{j-1}, v_{j+1}\}$  such that $v_q$ is adjacent with four of the vertices in $\{x,v_0,v_1,v_i,v_{j+1}\}$. By Lemmas \ref{l1}(1),  and \ref{l1}(3), $q \ne t-1$.

Suppose $j+2 \le q <t-1$. Since $\Delta=6$, $q=j+2$.  Note,  by Fact 1, we cannot have $v_q\sim \{x,v_0,v_1\}$. Hence, $v_q\sim \{v_{i},v_{j+1}\}$.

Assume first that $v_q\sim x$.  So $v_q$ is adjacent with exactly one of $v_0$ and $v_1$. Let us first assume that $v_q \sim v_1$. We now determine non-adjacencies in $\langle N(v_q) \rangle$.  By Lemmas \ref{l1}(1), \ref{l1}(2) and \ref{l1}(3),  $x \nsim \{v_1,v_i,v_{j+1},v_{q+1}\}$,  $v_1 \nsim \{v_{j+1}, v_{q+1}\}$, and  $v_{j+1} \nsim \{v_i, v_{q+1}\}$.   So $\xi(v_q) < 1/2$, which is not possible. Hence $v_q \nsim v_1$ and thus $v_q \sim v_0$. So $q=k$. Since $\Delta =6$, and from the case we are in and by Lemmas \ref{l1}(1), and \ref{l1}(2) $x \nsim\{v_i,v_{q-1}, v_{q+1}\}$, $v_{q-1} \nsim \{v_i, v_{q+1}\}$. Also if $q < t-2$, then $v_0 \nsim\{v_{q-1}, v_{q+1}\}$. Since $\xi(v_q) \ge 1/2$, $v_i \sim v_{q+1}$. Thus $v_{q+1}v_i \overleftarrow{C}v_1v_jv_{j+1}v_qxv_0\overleftarrow{C}v_{q+1}$ is an extension of $C$, contrary to our assumption. Thus $q(=k)=t-2$. By Lemma \ref{l1} and the fact that $\Delta =6$, $v_{j+1} \nsim \{x,v_0,v_1,v_i,v_{t-1}\}$ and $v_{t-1} \nsim \{x,v_1,v_i, v_j, v_{j+1}\}$. Let $S=\{v_1,v_i, v_{j+1}, v_{t-1}\}$ and $T=\{v_0,v_j, v_k\}$. Note that $\deg (v_0) = \deg (v_j) = \deg (v_k) =6= \Delta$. Moreover, $N(v_k) \subseteq S \cup T$. Since $x \sim T$, $x \nsim S$, $\Delta = 6$ and $G$ is locally connected, $x$ has no neighbours in $G$ other than those in $T$. Similarly, $N(v_{j+1}) =\{v_j, v_k\}$ and $N(v_{t-1}) =\{v_k, v_0\}$. Since $\langle N(v_0) \rangle$ has seven non-adjacencies, namely $x \nsim \{v_1, v_i, v_{t-1}\}$, $v_1 \nsim \{v_k, v_{t-1}\}$, and $\{v_i, v_j\} \nsim v_{t-1}$ and $\xi(v_0) \ge 1/2$ it follows that $v_i \sim v_1$. So $G$ is isomorphic to the graph $H_5$ with attachement set $\{v_1, v_i\}$.

Hence $v_q \nsim x$. Thus $v_q \sim \{v_0, v_1\}$ and once again $q=k$. We now consider the non-adjacencies in $\langle N(v_0) \rangle$.  Using the previous observations, Lemma \ref{l1}, and the fact that $\Delta =6$ we see that $x \nsim \{v_1,v_i,v_k, v_{t-1} \}$, and $v_{t-1} \nsim \{v_1,v_i, v_j \}$. Since $\xi(v_0) \ge 1/2$ all other pairs of vertices in $\langle N(v_0) \rangle$ are adjacent. Hence $v_k \sim \{v_0,v_1,v_i, v_j, v_{t-1} \}$.  Moreover, $v_k \sim \{v_{k-1}(=v_{j+1}), v_{k+1}\}$ and $v_1 \sim v_i$. Since $\Delta = 6$, $k+1 = t-1$. By Lemma \ref{l1} and the fact that $\Delta =6$, $v_{j+1} \nsim \{x,v_0,v_1, v_i\}$. Also $v_{j+1} \nsim v_{t-1}$ otherwise, $v_0xv_jv_1 \overrightarrow{C} v_iv_kv_{j+1}v_{t-1}v_0$ is an extension of $C$. Since $G$ is locally connected and $\deg(v_0) = \deg(v_j) = \deg(v_k) =6$, it follows that $\deg(x) =\deg(v_{t-1}) =\deg (v_{j+1}) =2$. So $G$ contains $H_4$ as strong induced subgraph with attachment set $S_4 =\{v_1,v_i\}$, contrary to the hypothesis.

Hence $1 < q < i$.  So $v_q \nsim v_0$. Hence $v_q \sim \{x,v_1,v_i,v_j,v_{j+1}\}$. Since $\Delta =6$, $q=i-1$ or $q=2$. By Lemma \ref{l1}, the case we are in,  and the fact that $\Delta =6$ it follows in either case that  $\langle N(v_j) \rangle$ has the following non-adjacencies:  $x \nsim \{v_1,v_i,v_{j+1}\}$, $v_0 \nsim \{v_q, v_{j+1}\}$, $v_1 \nsim \{v_i, v_{j+1}\}$ and $v_i \nsim v_{j+1}$. So $\xi(v_j) < 1/2$, which is not possible.

\noindent{\bf Case C(2)} Assume  $j-1\ne i$ and $j+1=k$. We consider non-adjacencies in $\langle N(v_j) \rangle$. By Lemma \ref{l1} we have $x \nsim \{v_1,v_{j-1},v_k\}$, $v_k \nsim \{v_1,v_{j-1} \}$ and as $\Delta = 6$, $v_0 \nsim v_{j-1}$. So since $\xi(v_j)\geq 1/2$, $\deg(v_j) =6$ and hence there is a neighbour $v_q$ of $v_j$ that has four neighbours in $\{x,v_{k},v_0,v_1,v_{j-1}\}$. Assume first that $k<q \le t-1$. Since $v_{t-1} \nsim \{x, v_{j-1} \}$, $q \ne t-1$. As $\Delta=6$ we have $v_q\nsim v_0$, and hence, $\{x,v_1,v_{j-1},v_j,v_k\} \subseteq N(v_q) $. Since $\Delta=6$, it follows that $q=k+1$.  As before we can argue that $\langle N(v_q) \rangle$ has the following non-adjacencies: $x \nsim \{v_1, v_{j-1}, v_k, v_{q+1}\}$, $v_k \nsim \{v_1, v_{j-1}\}$,  and $v_{q+1} \nsim v_k$. Since $\xi(v_q) \ge 1/2$, $v_{q+1} \sim v_{j-1}$. But now $v_0xv_jv_kv_qv_1\overrightarrow{C} v_{j-1}v_{q+1} \overrightarrow{C} v_0$ is an extension of $C$.

Thus, $1<q<j-1$. Since, by Lemma \ref{l2}, $v_j$ and $v_q$ cannot be twins, $v_q$ is non-adjacent with exactly one vertex in $\{x,v_{k},v_0,v_1,v_{j-1}\}$. Since $v_q$ is also adjacent with $v_j$ and as $\Delta =6$ we see that $q=2$ or $q=j-2$.  From the case we are considering,  by Lemmas \ref{l1}(1), \ref{l1}(2) and \ref{l1}(3) and from the fact that $\Delta = 6$ we have the following non-adjacencies in $\langle N(v_0) \rangle$: $x \nsim \{v_1, v_k, v_{t-1}\}$, $v_1 \nsim \{v_k, v_{t-1}\}$, and $v_j \nsim v_{t-1}$. Also by Fact 1, $v_i$ is not adjacent with both $x$ and $v_{t-1}$. Since $\xi(v_0) \ge 1/2$, it follows that all remaining pairs of vertices in $\langle N(v_0) \rangle$ are adjacent. In particular $v_i \sim \{v_j, v_k\}$. Since $\Delta = 6$, $v_i = v_q$.

Assume first that $v_i \nsim x$. Then $v_i \sim \{v_0,v_1,v_j,v_k, v_{t-1}\}$. Moreover, since $v_i=v_q$, we also have $v_i \sim v_{j-1}$. Since $\Delta=6$, it necessarily follows that $i=2$ and $i=j-2$. Since $\langle N(v_j) \rangle$ has seven non-adjacencies, namely, $x \nsim\{v_1,v_i,v_{j-1}, v_k\}$, $v_k \nsim \{v_1, v_{j-1}\}$ and $v_0 \nsim v_{j-1}$, all other pairs of vertices in $\langle N(v_j) \rangle$ are adjacent. In particular, $v_1 \sim v_{j-1}$. Hence $v_0xv_jv_{j-1}v_1v_iv_k \overrightarrow{C}v_{t-1}v_0$ is an extension of $C$.

Suppose next that $v_i \sim x$ and hence that $v_i \nsim v_{t-1}$. Then $v_i \sim \{x,v_0,v_1,v_j,v_k\}$. Since $v_i$ and $v_j$ are not true twins, by Lemma \ref{l2},  $v_i \nsim v_{j-1}$. So $v_{i+1} \ne v_{j-1}$. Since $\Delta = 6$, it follows that $\deg(v_i)=6$ and that $i=2$. Since $v_{j-1} \nsim \{x,v_0,v_i, v_{j+1}\}$ and  $\langle N(v_j) \rangle$ is connected, $v_{j-1} \sim v_1$. Since $i=2$, this produces a contradiction to Lemma \ref{l1}(2). So this case cannot occur. This completes the proof of Fact 2. $\Box$

\medskip

\noindent{\bf Fact 3}
If $v_j$ is the only common neighbour of $x$ and $v_0$, then $v_i$ is non-adjacent with some vertex in $\{v_1,v_j,v_k,v_{t-1}\}$ and
$v_k$ is non-adjacent with some vertex of $\{v_1,v_i,v_j, v_{t-1}\}$. \\
{\bf Proof of Fact 3} We show that $v_i$ is non-adjacent with some vertex of $\{v_1,v_j,v_k,v_{t-1}\}$. It can be argued in a similar manner that $v_k$ is non-adjacent with some vertex of $\{v_1,v_i,v_j, v_{t-1}\}$. Assume, to the contrary, that $v_i \sim \{v_0,v_1,v_j,v_k,v_{t-1}\}$. Since $\Delta=6$, $i-1=1$ or $i+1=j$. It follows from Lemma \ref{l1}(2) that $i+1\ne j$, and hence $i-1=1$, i.e., $i=2$. We consider non-adjacencies in $\langle N(v_0) \rangle$. By our assumption $ x \nsim \{v_1,v_i,v_k,v_{t-1} \}$ and by Fact 2, $v_j \nsim \{v_1,v_{t-1} \}$. If $k=j+1$, then by Lemma \ref{l1}(2), $v_k \nsim v_1$. Since $ \xi(v_0) \ge 1/2$, it follows that $v_1 \sim v_{t-1}$. By Lemma \ref{l1}(3), this is not possible. So $ k \ne j+1$.

Suppose first that $j-1 > 3$. We now consider the non-adjacencies in $\langle N(v_j) \rangle$. By Lemma \ref{l1}(1) and the fact that $\Delta =6$, $\{x,v_0,v_2\} \nsim \{v_{j-1}, v_{j+1} \}$.  Since $\xi(v_j) \ge 1/2$, it follows that $v_j$ has a neighbour not in $N = \{x,v_0,v_2,v_{j-1},v_{j+1} \}$ that is adjacent with at least four vertices of $N$. By Lemma \ref{l1}(1), such a vertex cannot be an off-cycle neighbour of $v_j$ and hence lies on $C$. Let $v_q$ be this neighbour. By Lemma \ref{l2}, $v_q$ must be adjacent with exactly four vertices of $N$. Since $ v_j \nsim \{v_1,v_{t-1} \}$ either $2 < q < j-1$ or $j+1 < q < t-1$. If $q=k$, then $v_q \nsim x$ and hence $v_q(=v_k) \sim \{v_2,v_{j-1},v_j,v_{j+1}\}$ and $v_{j-1} \sim v_{j+1}$. Since $ \Delta =6$, $k=j+2$. We now consider non-adjacencies in $\langle N(v_2) \rangle$. By an earlier observation, $v_j \nsim \{v_1,v_{t-1}\}$ and since $\Delta =6$, $v_k \nsim v_1$ and $v_{3} \nsim \{v_0,v_j,v_k\}$. Also $v_1 \nsim v_{t-1}$; otherwise, $v_0xv_jv_{j+1}v_{j-1} \overleftarrow{C}v_1v_{t-1} \overleftarrow{C}v_kv_0$ is an extension of $C$. This gives seven non-adjacencies in $\langle N(v_2) \rangle$. Since $\xi(v_2) \ge 1/2$, it follows that $v_{3} \sim \{v_1,v_{t-1} \}$. Thus $v_0xv_jv_{j+1}v_{j-1} \overleftarrow{C} v_{3}v_{t-1} \overleftarrow{C}v_kv_2v_1v_0$ is an extension of $C$, contrary to our assumption. So $q \ne k$, i.e., $v_q \nsim v_0$. Thus $v_q \sim \{x,v_2,v_{j-1},v_j,v_{j+1}\}$. Since $\deg(v_2) \le 6$  we see that $q = 3$. But now there exists eight non-adjacencies in $\langle N(v_j) \rangle$: $\{x,v_0,v_2\} \nsim \{v_{j-1}, v_{j+1}\}$, $v_0 \nsim v_3$, and $x \nsim v_2$. So $\xi(v_j) < 1/2$ which is not possible.

This completes the proof of Fact 3. $\Box$

\medskip

\noindent{\bf Fact 4} $N(v_0) \cap N(x) \ne \{v_j\}$. \\
{\bf Proof of Fact 4} Assume that $v_j$ is the only common neighbour of $x$ and $v_0$. By  this assumption, Lemma \ref{l1}(1) and Fact 2, $x \nsim\{v_1, v_i, v_k, v_{t-1}\}$ and $v_j \nsim \{v_1, v_{t-1}\}$. Since $\xi(v_0) \ge 1/2$, there is only one more non-adjacency in $\langle N(v_0) \rangle$. By Fact 3, it follows that $v_i \nsim v_k$. So $E(\langle N(v_0)\rangle)=\{v_1v_i,v_1v_k,v_1v_{t-1},v_iv_j,v_iv_{t-1},$ $v_jv_k,v_jx,v_kv_{t-1}\}$. By Lemma \ref{l1}(2), $i \ne j-1$ and $k \ne j+1$. So $\deg(v_j) =6$. By Lemma \ref{l1}(1) and the case we are in $x \nsim \{v_i,v_{j-1}, v_{j+1}, v_k \}$. Since $\Delta =6$, $v_0 \nsim \{v_{j-1},v_{j+1} \}$. By the above $v_i \nsim v_k$. Hence as $\xi(v_j) \ge 1/2$ we have $v_{j-1} \sim \{v_i,v_{j+1}, v_k \}$ and $v_{j+1}\sim \{v_i,v_k\}$. Since $\Delta=6$, it follows that $i=j-2=2$ and $k=j+2=t-2$, i.e., $i=2$, $j=4$, $k=6$ and $t=8$. But now  $v_0xv_4v_{3}v_{5}v_6v_{7}v_2v_1v_0$ is an extension of $C$ which is not possible. This completes the proof of Fact 4. $\Box$.

\medskip

\noindent{\bf Fact 5} $v_i \sim x$ implies $v_1 \nsim v_i$ or $v_1\nsim v_{t-1}$ and  $v_k \sim x$ implies $v_k\nsim v_{t-1}$ or $v_1\nsim v_{t-1}$.\\
{\bf Proof of Fact 5} We show that $v_i \sim x$ implies $v_1 \nsim v_i$ or $v_1\nsim v_{t-1}$. It can be shown in a similar manner that $v_k \sim x$ implies $v_k\nsim v_{t-1}$ or $v_1\nsim v_{t-1}$. Assume that $v_i \sim x$ and that $v_1\sim \{v_i,v_{t-1}\}$.  By Lemmas \ref{l1}(2) and \ref{l1}(3), $i-1\ne 1$ and $i+1\ne j$. Moreover, by Lemmas \ref{l1}(1), \ref{l1}(2) and \ref{l1}(3), $x \nsim \{v_1,v_{i-1},v_{i+1} \}$, $v_{i+1} \nsim v_1$, and $v_{i-1} \nsim v_{i+1}$. Also since $\Delta =6$, $v_0 \nsim \{v_{i-1}, v_{i+1} \}$. Since $\xi(v_i) \ge 1/2$ it follows that $v_i$ has a neighbour not in $\{x,v_0,v_1,v_{i-1},v_{i+1} \}$ and this neighbour must be a true twin of $v_i$. By Lemma \ref{l2} this is not possible and completes the proof of Fact 5. $\Box$

\medskip

\noindent{\bf Fact 6} $N(v_0) \cap N(x) \ne \{v_i,v_j,v_k\}$.\\
{\bf Proof of Fact 6} Suppose $N(v_0) \cap N(x) = \{v_i,v_j,v_k\}$. By Lemma \ref{l1}(1) $x \nsim \{v_1, v_{t-1}\}$, by Fact 1, $v_i \nsim v_{t-1}$ and $v_k \nsim v_1$ and by Fact 2, $v_j \nsim \{v_1,v_{t-1} \}$. Since $v_i \sim x$ it follows from Fact 5  that either $v_1 \nsim v_{t-1}$ or $v_i \nsim v_1$. Since $v_k \sim x$ it also follows from Fact 5 that either $v_1 \nsim v_{t-1}$ or $v_k \nsim v_{t-1}$. Since $\xi(v_0) \ge 1/2$ we conclude that $v_1 \nsim v_{t-1}$ and that  $v_j \sim \{v_i,v_k\}$, $v_i \sim \{v_1,v_k\}$ and $v_k \sim v_{t-1}$. Since $\Delta=6$, $i=2$ and $k=t-2$. If $j \ne i+2$, then it follows since $\Delta =6$, that $v_{i+1} \nsim \{v_0,v_j,v_k\}$. By Lemmas \ref{l1}(1) and \ref{l1}(2), $v_{i+1} \nsim \{x, v_1\}$. Hence $v_{i+1}$ is isolated in $\langle N(v_i) \rangle$.  This is not possible since $G$ is locally connected. Thus $j=i+2=4$ and similarly $j = k-2=4$. So $t=8$. By Lemmas \ref{l1}(1) and \ref{l1}(2), $\{x,v_1,v_3,v_5,v_7\}$ is an independent set. Since $\Delta =6$ and $G$ is locally connected, we see that $G = \langle \{x,v_0,v_1, \ldots, v_7\} \rangle \cong F_3$, a contradiction. This completes the proof of Fact 6. $\Box$

\noindent{\bf Fact 7} $N(v_0) \cap N(x) \ne \{v_i,v_j\}$ and $N(v_0) \cap N(x) \ne \{v_k,v_j\}$. \\
{\bf Proof of Fact 7} We prove the first of these two statements since the second statement can be proven in a similar manner. Suppose $N(v_0) \cap N(x) =\{v_i,v_j\}$.  By Lemma \ref{l1}(1), $j \ne i+1$. We know from Fact 5 that either $v_1 \nsim v_{t-1}$ or $v_1 \nsim v_i$. Suppose first that $v_1 \nsim v_{t-1}$.  By Lemma \ref{l1}(1) and our assumption, $x \nsim \{v_1, v_k, v_{t-1} \}$, by Fact 1 $v_i \nsim v_{t-1}$ and by Fact 2 $v_j \nsim \{v_1,v_{t-1} \}$. Since $\xi(v_0) \ge 1/2$, all other pairs of vertices in $N(v_0)$ are adjacent. In particular $v_k \sim \{v_1,v_i,v_j,v_{t-1}\}$ and $v_i \sim \{x,v_1,v_j,v_k\}$. Since $v_k \sim v_1$, it follows from Lemma \ref{l1}(2) that $k \ne j+1$. Thus, since $\Delta =6$, $i=2$ and $k=t-2$. If $j \ne i+2$, then the fact that $\Delta =6$ and Lemmas \ref{l1}(1) and \ref{l1}(2) imply $v_{i+1}  \nsim \{x,v_0,v_1,v_j,v_k\}$. Hence $v_{i+1}$ is isolated in $\langle N(v_i) \rangle$, contrary to the fact that $G$ is locally connected. So $j =i+2=4$. If $k > j+2$, then $\Delta =6$ and Lemmas \ref{l1}(1) and \ref{l1}(2) imply $v_{j+1} \nsim \{x,v_0,v_i,v_{i+1}, v_k\}$. This again contradicts the fact that $G$ is  locally connected. So $k = j+2 =6$. By Lemmas \ref{l1}(1) and \ref{l1}(2) $\{x, v_1,v_3, v_5\}$ and $\{x, v_1, v_3, v_7\}$ are independent sets.   Also $v_5 \nsim v_7$, otherwise, $v_0xv_2v_3v_4v_5v_7v_6v_1v_0$ is an extension of $C$. Since $G$ is connected, locally connected and $\Delta =6$ it follows that $\deg(v_7)=\deg(v_5)=\deg(v_3) = 2$ and $\deg(v_1) = \deg(x)=3$. Thus $G \cong F_4$.

Assume now that $v_1 \sim v_{t-1}$.  By Fact 5, $v_i \nsim v_1$. By Lemma \ref{l1}(1), Fact 1, Fact 2, and Fact 6, we have the following seven non-adjacencies in $\langle N(v_0) \rangle$: $x \nsim \{v_1,v_k,v_{t-1} \}$, $v_i \nsim \{v_1,v_{t-1}\}$ and $v_j \nsim \{v_1,v_{t-1} \}$. So all other pairs of vertices in $N(v_0)$ are adjacent. In particular, $v_k \sim \{v_1,v_i,v_j,v_{t-1}\}$ and $v_i \sim v_j$. So by Lemmas \ref{l1}(1) and \ref{l1}(2), $k \ne j+1$ and $j \ne i+1$. Since $\Delta =6$, $k=t-2$.  By Lemma \ref{l1}(1), and the fact that $\Delta =6$ and Fact 6 we have the following seven non-adjacencies in $\langle N(v_j) \rangle$: $\{x,v_0\} \nsim \{v_{j-1},v_{j+1}\}$, $v_{j+1} \nsim v_i$, $v_{j-1} \nsim v_k$ and $v_k \nsim x$. Since $\xi(v_j) \ge 1/2$ all other pairs of vertices in $N(v_j)$ are adjacent. So $v_{j-1} \sim v_{j+1}$, $v_{j+1} \sim v_k$, and $v_{j-1} \sim v_i$. Since $\Delta =6$, $j=i+2$ and $k=j+2$. Hence $v_{t-1}v_1 \overrightarrow{C}v_{j-1}v_{j+1}v_jxv_0v_kv_{t-1}$ is an extension of $C$, a contradiction.  This completes the proof of Fact 7. $\Box$

\medskip

\noindent{\bf Fact 8} $N(v_0) \cap N(x) \ne \{v_i,v_k\}$.\\
 {\bf Proof of Fact 8} Suppose $x \sim \{v_i,v_k\}$ and thus $x \nsim v_j$. By Fact 5 either $v_1 \nsim v_{t-1}$ or both $v_1 \nsim v_i$ and $v_{t-1} \nsim v_k$.  In the latter case there are, by Fact 1, Lemma \ref{l1}(1) and our case, the following non-adjacencies in $\langle N(v_0) \rangle$: $x \nsim \{v_1,v_j,v_{t-1} \}$, $v_i \nsim \{v_1,v_{t-1} \}$ and $v_k \nsim \{v_1,v_{t-1} \}$. Since $\xi(v_0) \ge 1/2$, all other pairs of vertices in $N(v_0)$ are adjacent. So $v_j \sim \{v_1,v_i,v_k,v_{t-1}\}$, $v_i \sim v_k$ and $v_1 \sim v_{t-1}$. Since $\Delta =6$, $j=i+1$ or $j=k-1$. However, from Lemma \ref{l1}(2),  $j \ne i+1$ and $j \ne k-1$. Hence this case cannot occur.

So $v_1 \nsim v_{t-1}$. In addition, by Lemma \ref{l1}(1), Fact 1 and the case we are in, $x \nsim \{v_1,v_j,v_{t-1}\}$, $v_i \nsim v_{t-1}$ and $v_k \nsim v_1$. Thus, since $\xi(v_0) \ge 1/2$, there is at most one additional pair of non-adjacent vertices in $N(v_0)$.  So either $\langle\{v_1,v_i,v_j\} \rangle$ induces a $K_3$ or $\langle\{v_{t-1},v_k,v_j\} \rangle$ induces a $K_3$. Without loss of generality the former case occurs. The reasoning for the second case is analogous.  Since $v_j \sim v_1$, it follows from Lemma \ref{l1}(2), that $j \ne i+1$. If $i \ne 2$, then by Lemmas \ref{l1}(1), \ref{l1}(2), the fact that $\Delta =6$ and from the case we are considering, it follows that $\langle N(v_i) \rangle$ has the following seven non-adjacencies: $x \nsim \{v_1,v_{i-1}, v_{i+1},v_j\}$, $v_1 \nsim v_{i+1}$, $v_0 \nsim \{v_{i-1}, v_{i+1} \}$. Since $\xi(v_i) \ge 1/2$ all other pairs of vertices in $N(v_i)$ are adjacent. In particular $v_{i-1} \sim v_{i+1}$.  By Lemma \ref{l1}(3), $C$ is extendable, a contradiction.

So $i=2$.  Since $v_j \sim v_1$, it follows from Lemma \ref{l1}(2), that $j \ne k-1$. So if $v_j \sim \{v_k, v_{t-1} \}$, then,  $\deg (v_j ) \ge 7$ which is not possible. So $v_j$ is adjacent with at most one of $v_k$ and $v_{t-1}$. Since we have already described six non-adjacencies in $\langle N(v_0) \rangle$ and  $\xi(v_0) \ge 1/2$, $v_j$ is non-adjacent with exactly one of $v_k$ and $v_{t-1}$.  So $\deg (v_j) =6$. Since $\xi(v_0) \ge 1/2$, $v_k \sim \{v_2,v_{t-1} \}$. If $v_j \sim v_{t-1}$, then $v_j \nsim v_k$. But now $\langle N(v_2) \rangle$ has at least seven non-adjacencies, namely:  $x \nsim \{v_{1},v_{3}, v_j\}$, $v_{3} \nsim \{v_0,v_{1}\}$, and $v_k \nsim \{v_{1},v_j \}$. Since $\xi(v_2) \ge 1/2$ all other pairs of vertices in $N(v_2)$ are adjacent. So $v_3 \sim \{v_j, v_k\}$. Since $\Delta =6$, $j=i+2=4$ and $k=t-2$.
If $v_3 \sim v_{t-1}$, then $v_{t-1}v_3v_{t-2} \overleftarrow{C}v_4v_1v_2xv_0v_{t-1}$ is an extension of $C$. But now we can argue as before that $\langle N(v_{t-2}) \rangle$ has at least seven non-adjacencies, namely: $x \nsim\{v_3,v_{t-1}, v_{t-3}\}$, $v_3 \nsim \{v_0,v_{t-1}\}$ and $v_{t-3} \nsim \{v_{t-1}, v_0\}$. Since $\xi(v_{t-2}) \ge 1/2$ it follows that all other pairs of vertices in $N(v_{t-2})$ are adjacent. Hence $v_2 \sim v_{t-3}$, contrary to the fact that $\deg (v_{2}) \le 6$.

Thus $v_j \nsim v_{t-1}$ and $v_j \sim v_k$. Since $\Delta =6$, $k=t-2$ or $k=j+1$. By  the above $k \ne j+1$. So $k=t-2$. Since $\Delta =6$ and $\langle N(v_2) \rangle$ has seven non-adjacencies, namely:  $x \nsim\{v_{1},v_{3}, v_j\}$, $v_{3} \nsim \{v_0,v_{1}\}$, $v_k \nsim \{v_{1}, v_{3} \}$, we have $j=i+2=4$. As $G$ is locally connected and $\Delta =6$, the vertices $v_0,v_2,v_4$, and $v_{t-2}$ all have degree $6$. Since $\Delta =6$ and by Lemma \ref{l1}(4) we see that $v_5 \nsim \{v_0,v_1,v_2\}$. Also $v_5 \nsim v_3$; otherwise, $v_0xv_2v_1v_4 \overleftarrow{C} v_3v_5 \overrightarrow{C} v_{t-1}v_0$ is an extension of $C$. Since $G$ is locally connected, $v_5 \sim v_k(=v_{t-2})$. Since $\Delta =6$, $k=j+2=6$ and thus $t=8$. Since $\Delta =6$ and $G$ is locally connected we see that  $v_{3}, v_{5}$, and $v_{7}$ all have degree $2$ in $G$ and the vertices $x$ and $v_1$ both have degree $3$ in $G$. So $G \cong F_4$, contrary to the hypothesis. So this case cannot occur. This completes the proof of Fact 8. $\Box$

\medskip

\noindent{\bf Fact 9}  $N(v_0) \cap N(x) \ne \{v_i\}$ and $N(v_0) \cap N(x) \ne \{v_k\}$.\\
{\bf Proof of Fact 9} We prove the first of these two statements. The second statement can be proven similarly. By Lemma \ref{l1}(1), Fact 1, Fact 5, and the case we are in $\langle N(v_0) \rangle$ has the following non-adjacencies: $x \nsim \{v_1,v_j,v_k,v_{t-1}\}$, $v_i \nsim v_{t-1}$ and either $v_1 \nsim v_{t-1}$ or $v_i \nsim v_1$. Since $\xi(v_0) \ge 1/2$ it follows that there is at most one more non-adjacency in $\langle N(v_0) \rangle$.

\smallskip

\noindent{\bf Case A} Suppose $v_k \nsim v_1$. So all other pairs of vertices in $N(v_0)$ are adjacent. So in particular $v_j \sim \{v_1,v_i,v_k,v_{t-1} \}$ and $v_i \sim v_k$.  By Lemma \ref{l1}(2), and since $v_j \sim v_1$ we have $ j \ne i+1$. Thus $j >3$. Also since $\Delta =6$, $k=j+1$. We consider two cases depending on whether $v_1 \nsim v_i$ or $v_1 \nsim v_{t-1}$.  If $v_1 \nsim v_i$, then $v_1 \sim v_{t-1}$ and $i \ne 2$. From the case we are considering, by Lemma \ref{l1} and using the fact that $\Delta =6$ we have the following non-adjacencies in $\langle N(v_i) \rangle$: $x \nsim \{v_{i-1}, v_{i+1}, v_j,v_k \}$, $v_{i-1} \nsim \{v_0,v_j\}$, and $v_{i+1} \nsim v_0$. Since $\xi(v_i) \ge 1/2$, $v_{i-1} \sim v_{i+1}$. Hence $v_j \overleftarrow{C}v_{i+1}v_{i-1} \overleftarrow{C}v_1v_{t-1} \overleftarrow{C} v_kv_ixv_0v_j$ is an extension of $C$, which is not possible.

Suppose next that $v_1 \nsim v_{t-1}$ and that $v_1 \sim v_i$. Since $\Delta =6$, $i=2$. By Lemma \ref{l1}(2), $v_3 \nsim v_1$ and so $j \ne 3$. As in the previous case we can argue that in  $\langle N(v_2) \rangle$ we have the following non-adjacencies: $x \nsim \{v_1, v_{3}, v_j,v_k \}$, $v_{3} \nsim \{v_0, v_1\}$ and by assumption $v_k \nsim v_1$.  Since $\xi(v_2) \ge 1/2$ it follows that all other pairs of vertices in $N(v_2)$ are adjacent. In particular $v_{3} \sim \{v_j, v_k\}$. Since $\Delta=6$, $j =i+2=4$ and $k=j+1=5$. Thus $v_0xv_2v_{3}v_5 \overrightarrow{C}v_{t-1}v_4 v_1v_0$ is an extension of $C$ which is not possible.

\smallskip

\noindent{\bf Case B} Suppose $v_k \sim v_1$.  As before, we have the following non-adjacencies in $\langle N(v_0) \rangle$: $x \nsim \{v_1,v_j,v_k, v_{t-1}\}$, $v_i \nsim  v_{t-1}$ and either $v_1 \nsim v_{t-1}$ or $v_i \nsim v_1$.

\smallskip

\noindent{\bf Subcase B(1)} Suppose $v_1 \nsim v_{t-1}$. Since $\xi(v_0)\geq\frac{1}{2}$, there can be at most one additional non-adjacency  in $\langle N(v_0)\rangle$.

\smallskip

\noindent{\bf Subcase B(1.1)} $v_j \nsim v_1$. Then $v_k \sim\{v_1, v_i, v_j, v_{t-1}\}$ and $v_i \sim \{v_1,v_j\}$ and $v_j \sim v_{t-1}$. Since $\Delta=6$, it follows that $k=t-2$ or $k=j+1$.
Suppose first that $k=t-2$. Since $\Delta =6$ we also see that either $j=i+1$ or $i=2$. If $j=i+1$, then $v_0xv_i \overleftarrow{C}v_1v_k \overleftarrow{C} v_j v_{t-1} v_0$ is an extension of $C$. So $j \ne i+1$ and $i=2$. By Lemma \ref{l1}(2), $v_{1} \nsim v_{3}$. If $k=j+1$, then $v_0xv_2 \overrightarrow{C}v_jv_{t-1}v_kv_1v_0$ is an extension of $C$.  So $k \ne j+1$. Hence, using Lemma \ref{l1}(1), the fact that $\Delta=6$ and the case we are in there are eight non-adjacencies in $\langle N(v_2) \rangle$, namely, $x \nsim \{v_1,v_{3},v_j,v_k\}$, $v_{3} \nsim \{v_0,v_{1},v_k\}$, and $v_j \nsim v_1$. This is not possible since $\xi(v_2) \ge 1/2$ and $\Delta =6$.

So $ k\ne t-2$ and $k=j+1$. If $j \ne i+1$, it follows since $\Delta =6$ and $v_i \sim v_1$, that $i=2$. Again by Lemma \ref{l1}(2), $v_{1} \nsim v_{3}$. However, then we see as before that there are eight non-adjacencies in $\langle N(v_2) \rangle$, namely: $x \nsim \{v_{1},v_{3}, v_j,v_k\}$, $v_{3} \nsim \{v_0,v_{1},v_k\}$ and $v_j \nsim v_1$. Since $\xi(v_2) \ge 1/2$ and $\Delta = 6$ this is not possible. So $k=j+1$ and $j=i+1$. Thus $v_0xv_i \overleftarrow{C}v_1v_k \overrightarrow{C} v_{t-1} v_j v_0$ is an extension of $C$.

\noindent{\bf Subcase B(1.2)}  $v_j\sim v_1$.  \\
{\bf Subcase B(1.2.1)} $v_1 \nsim v_i$. Thus $i \ne 2$ and $v_j \sim \{v_i, v_k, v_{t-1}\}$ and $v_k \sim \{v_1,v_i,v_{t-1}\}$. Since $\Delta=6$, it follows that $k=j+1$. Since $v_j \sim v_1$, it follows from Lemma \ref{l1}(2), that $j \ne i+1$. By Lemma \ref{l1}(1) and the fact that $\Delta =6$, $v_{i-1} \nsim \{x,v_0,v_j,v_k\}$. Since $\langle N(v_i) \rangle$ is connected, $v_{i-1} \sim v_{i+1}$. If $j \ne i+2$, then $v_{i+1} \nsim \{x,v_0,v_j,v_k\}$. But then $\langle N(v_i) \rangle$ has eight non-adjacencies, contrary to the fact that $\xi(v_i) \ge 1/2$. So $j=i+2$. Hence $v_0xv_iv_{i+2}v_{i+1}v_{i-1} \overleftarrow{C}v_1v_{j+1}\overrightarrow{C}v_{t-1}v_0$ is an extension of $C$, a contradiction.

\smallskip

\noindent{\bf Subcase B(1.2.2)} $v_1 \sim v_i$. Since $v_j \sim v_1$, it follows from Lemma \ref{l1}(2), that $j \ne i+1$.

\noindent {\bf Subcase B(1.2.2.1)} $v_i \sim \{v_j, v_k\}$. Since $\Delta=6$ it follows that $i=2$.  So, by Lemma \ref{l1}(4), $k \ne j+1$. Also $\Delta=6$ implies that $v_j$ is non-adjacent with at least one of $v_k$ and $v_{t-1}$. If $v_j \nsim v_k$, then there exist seven non-adjacencies in $\langle N(v_i) \rangle$, namely, $x \nsim \{v_1, v_3, v_j, v_k\}$, $v_3 \nsim \{v_0, v_1\}$ and $v_j \nsim v_k$. Moreover, there are seven non-adjacencies in $\langle N(v_0) \rangle$. So $\{v_3,v_{t-1}\} \sim \{v_j, v_k\}$. Hence $\Delta =6$ implies $k=t-2$ and $j=i+2=4$. Thus  $\langle N(v_4) \rangle$ has the following six non-adjacencies: $v_0 \nsim \{v_3, v_5\}$, $v_1 \nsim \{v_3, v_{t-1}\}$ and $v_2 \nsim \{v_5, v_{t-1}\}$. If $v_3 \sim v_{t-1}$, then $v_0xv_2v_1v_4 \overrightarrow{C}v_{t-2}v_3v_{t-1}v_0$ is an extension of $C$, which is not possible. If $v_5 \sim v_{t-1}$, then $v_0xv_2v_1v_4v_3v_{t-2} \overleftarrow{C}v_5v_{t-1}v_0$ is an extension of $C$, which is not possible. So $\xi(v_4) < 1/2$, contrary to the hypothesis.

So $v_j \sim v_k$ and $v_j \nsim v_{t-1}$. So $\langle N(v_0) \rangle$ has the following seven non-adjacencies: $x \nsim \{v_1, v_j, v_k, v_{t-1}\}$ and $v_{t-1} \nsim \{v_1, v_i, v_j\}$. Since $\xi(v_0) \ge 1/2$ all other pairs of vertices in $\langle N(v_0) \rangle$ are adjacent. So $v_k \sim v_{t-1}$. Since $k \ne j+1$ and $\Delta = 6$, it follows that $k=t-2$. By Lemma \ref{l1}(1) and \ref{l1}(2), the fact that $\Delta =6$ and from the case we are in, $\langle N(v_2) \rangle$ has the following non-adjacencies: $x \nsim \{v_1,v_3, v_j,v_k\}$, $v_3 \nsim \{v_0, v_1, v_k\}$. Since $\xi(v_2) \ge 1/2$, $v_3 \sim v_j$. So $\Delta =6$ implies $j=4$. We now consider the non-adjacencies in $\langle N(v_k) \rangle$. If $k \ne j+2=6$, then $v_{k-1} \nsim \{v_0, v_1, v_2, v_4\}$. Since $G$ is locally connected, $v_{k-1} \sim v_{t-1}$. So $v_0xv_2 \overrightarrow{C}v_{k-1}v_{t-1}v_{t-2}v_1v_0$ is an extension of $C$. So $k=j+2=6$ and $t=8$. If $v_3 \sim v_5$, then $v_0xv_2v_3v_5v_4v_1v_6v_7v_0$ is an extension of $C$. Also, $v_3 \nsim v_{t-1}$, otherwise, $v_0xv_2v_3v_7v_6v_5v_4v_1v_0$ is an extension of $C$. Since $\Delta=6$ and by Lemma \ref{l1}(2), $v_3 \nsim\{v_0,v_1\}$. We can argue similarly that $v_5 \nsim \{v_0,v_1,v_2,v_3,v_7\}$. By Lemma \ref{l1}(2), $x \nsim \{v_3, v_5\}$. Since $G$ is locally connected and $\Delta =6$, no vertex in $V(C) \cup\{x\}$ is adjacent with a vertex not in this set. Thus $G \sim F_3$, which is not possible.

\smallskip

\noindent{\bf Subcase B(1.2.2.2)} $v_i \sim v_j$ and $v_i \nsim v_k$. Since $\xi(v_0) \ge 1/2$, it follows that $v_j \sim v_k$ and  $v_{t-1} \sim \{v_j, v_k\}$. Thus $\Delta = 6$ implies that $j =k-1$. Since $v_1 \sim \{v_j,v_k\}$, it follows from Lemma \ref{l1}(4), that $i \ne 2$. Since $\Delta =6$ and by Lemma \ref{l1}(1) we now have seven non-adjacencies in $\langle N(v_i) \rangle$, namely, $x \nsim \{v_1, v_{i-1}, v_{i+1}, v_j\}$, $v_0 \nsim \{v_{i-1}, v_{i+1}\}$ and $v_j \nsim v_{i-1}$. Since $\xi(v_i) \ge 1/2$, $v_{i+1} \sim \{v_j, v_{i-1}\}$. Since $\Delta =6$, $j-1=i+1$. So $v_0xv_iv_{i+1}v_{i-1} \overleftarrow{C} v_1v_j \overrightarrow{C}v_0$ is an extension of $C$, which is not possible.

\smallskip

\noindent{\bf Subcase B(1.2.2.3)} $v_i \sim v_k$ and $v_i \nsim v_j$. Since $\xi(v_0) \ge 1/2$, it follows that $v_k \sim \{v_i, v_j, v_{t-1}\}$ and $v_j \sim v_{t-1}$.  Since $\Delta =6$, $k=t-2$ or $k=j+1$. If $i \ne 2$, then by Lemmas \ref{l1}(1) and \ref{l1}(2) and the fact that $\Delta =6$, we have the following eight non-adjacencies in $\langle N(v_i) \rangle$: $\{x, v_0, v_k\} \nsim \{v_{i-1}, v_{i+1}\}$, $x \nsim v_k$ and $v_1 \nsim v_{i+1}$. So $\xi(v_i) < 1/2$, contrary to the hypothesis. So $i=2$. Hence, by Lemma \ref{l1}(4), $k \ne j+1$. So $k=t-2$. From the case we are in, and the fact that $\Delta =6$ and by Lemma \ref{l1}(4) we have the following six non-adjcencies in $\langle N(v_j) \rangle$: $\{v_0, v_1\} \nsim \{v_{j-1}, v_{j+1}\}$, $v_k \nsim v_{j-1}$ and $v_1 \nsim v_{t-1}$.

If $k \ne j+2$, we also have $v_k \nsim v_{j+1}$. Since $\xi(v_j) \ge 1/2$ it now follows that $v_{t-1} \sim \{v_{j-1}, v_{j+1}\}$. Hence $v_0xv_2 \overrightarrow{C}v_{j-1}v_{t-1}v_{j+1} \overrightarrow{C}v_kv_jv_1v_0$ is an extension of $C$, a contradiction. So $k=j+2$. As before we have the following six non-adjacencies in $\langle N(v_j) \rangle$: $\{v_0, v_1\} \nsim \{v_{j-1}, v_{j+1}\}$, $v_k \nsim v_{j-1}$ and $v_1 \nsim v_{t-1}$. Since $\xi(v_j) \ge 1/2$ either $v_{j-1} \sim v_{t-1}$ or $v_{j+1} \sim v_{t-1}$. If $v_{j-1} \sim v_{t-1}$, then $v_0xv_2 \overrightarrow{C} v_{j-1}v_{t-1}v_j v_{j+1}v_{j+2}v_1v_0$ is an extension of $C$ which is not possible and if $v_{j+1} \sim v_{t-1}$, then $v_0xv_2 \overrightarrow{C}v_jv_1v_{j+2}v_{j+1}v_{t-1}v_0$ is an extension of $C$ which is not possible.


\noindent{\bf Subcase B(2)} $v_1 \nsim v_i$ and $v_1 \sim v_{t-1}$. So $i \ne 2$ and, by Lemma \ref{l1}(3), $j \ne i+1$. \\
{\bf Subcase B(2.1)} $v_i \sim \{v_j, v_k\}$. Then $\langle N(v_i) \rangle$ has the following six non-adjacencies: $x \nsim\{v_{i-1}, v_{i+1}, v_j, v_k\}$, and $v_0 \nsim \{v_{i-1}, v_{i+1}\}$. So $v_{i-1}$ is adjacent with $v_j$ or $v_k$. Suppose first that $k=j+1$. If  $v_{i-1} \sim v_j$, then $v_jv_{i-1}\overleftarrow{C}v_1v_k\overrightarrow{C}v_{t-1}v_0xv_i\overrightarrow{C}v_j$ is an extension of $C$ which is not ossible.  If $v_{i-1} \nsim v_j$, then $\xi(v_i) \ge 1/2$ implies that all remaining edges in $\langle N(v_i) \rangle$ are present. So $v_k \sim \{v_{i-1} , v_{i+1}\}$.  So $\deg(v_k) > 6= \Delta$, a contradiction.   So $k \ne j+1$. Suppose now that $v_{i-1} \sim v_j$. If $v_j \sim v_1$, then $\Delta =6$ implies $v_j \nsim \{v_k, v_{t-1}\}$. Thus $\xi(v_0) < 1/2$, contrary to the hypothesis. Thus $v_j \nsim v_1$. Since $\xi(v_0) \ge 1/2$ it follows that $v_j \sim \{v_k, v_{t-1}\}$. However then $\deg (v_j) \ge 7$.

So $v_{i-1} \nsim v_j$ and $v_{i-1} \sim v_k$.  Since $\xi(v_i) \ge 1/2$, it follows that $v_{i+1} \sim \{v_j, v_k\}$, $v_{i+1} \sim v_{i-1}$ and $v_k \sim \{v_{i-1},v_j\}$. So $\{v_0,v_{i-1},v_i,v_{i+1},v_j, v_{k-1},v_{k+1}\} \subseteq N(v_k)$, contrary to the fact that $\Delta =6$.

\smallskip

\noindent{\bf Subcase B(2.2)} $v_i$ is non-adjacent with exactly one of $v_j$ or $v_k$. Suppose $v_i \nsim v_j$ and $v_i \sim v_k$. Thus $v_j \sim \{v_k, v_{t-1}\}$ and $v_k \sim v_{t-1}$. Since $\Delta =6$, $k=j+1$ or $k=t-2$. Moreover, there are seven non-adjacencies in $\langle N(v_i) \rangle$, namely, $x \nsim \{v_{i-1}, v_{i+1}, v_k\}$ and $\{v_0, v_k\} \nsim \{v_{i-1}, v_{i+1}\}$. Since $\xi(v_i) \ge 1/2$, there is a vertex in $N(v_i) -\{x,v_0,v_{i-1},v_{i+1},v_k\}$ that is necessarily a true twin of $v_i$. By Lemma \ref{l2} this is not possible.

So $v_i \nsim v_k$ and $v_i \sim v_j$. Since $\xi(v_0) \ge 1/2$,  $v_j \sim \{v_1,v_k, v_{t-1}\}$ and $v_k \sim v_{t-1}$. Since $j \ne i+1$ and $\Delta =6$, $k=j+1$. Since $\Delta =6$, $v_j \nsim v_{i-1}$.  So $\langle N(v_i) \rangle$ has the following non-adjacencies: $x \nsim \{v_{i-1}, v_{i+1}, v_j\}$, $v_{i-1} \nsim \{v_0, v_j\}$ and $v_0 \nsim v_{i+1}$. Since $\xi(v_i) \ge 1/2$, there is a vertex $v_q$ in $N(v_i) - \{x, v_0, v_{i-1}, v_{i+1}, v_j\}$. By Lemma \ref{l2}, $v_i$ and $v_q$ are not true twins. So $v_q$ is adjacent with all except exactly one vertex of $N(v_i) - \{ v_q\}$. Since $q \not\in \{1,k, t-1\}$ and $\Delta =6$, $v_0 \nsim v_q$. So $v_q \sim \{v_j,x\}$. Since $\Delta = 6$, $q=j-1$. Since $v_j \sim v_1$, we have a contradiction to Lemma \ref{l1}(2). So this case does not occur.

This completes the proof of Fact 9. $\Box$.
\medskip

\noindent Our result now follows from Facts 4, 6, 7, 8, and 9.
\end{proof}

\begin{corollary}\label{weakly_pancyclic}
If $G$ is a connected locally connected graph with $2 \le \Delta \le 6$ and minimum clustering coefficient at least 1/2, then $G$ is weakly pancyclic.
\end{corollary}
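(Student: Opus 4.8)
The plan is to split on the maximum degree and then push everything through Theorem~\ref{t1}. First I would dispose of $2\le\Delta\le 4$ using the structure recorded just before Theorem~\ref{t1}: if $\Delta\le 3$ then $G\in\{K_3,K_4,K_4-e\}$, while if $\Delta=4$ then $G$ is fully cycle extendable or $G\cong K_2+\overline K_3$. A fully cycle extendable graph is pancyclic --- start from a triangle through any vertex (one exists) and extend one vertex at a time up to a Hamilton cycle --- and hence weakly pancyclic; and $K_2+\overline K_3$ has $g=3$, $c=4$ and a $4$-cycle, so it too is weakly pancyclic. This leaves $\Delta\in\{5,6\}$, which is the substance of the statement.

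For $\Delta\in\{5,6\}$ note first that $g(G)=3$, since local connectedness forces every edge into a triangle; so weak pancyclicity here means precisely that $G$ has a cycle of every length $3,4,\dots,c(G)$. Now invoke Theorem~\ref{t1}. If $G$ is fully cycle extendable it is pancyclic, as above, hence weakly pancyclic. If $G$ is one of the four graphs $F_i$, I would verify by inspection that each has girth $3$ and a cycle of every length up to its (small, explicit) circumference --- for instance $F_1=K_3+\overline K_4$ has $c(F_1)=6$ and cycles of lengths $3,4,5,6$, and $F_2,F_3,F_4$ are equally concrete. The only remaining possibility is that $G$ contains some $H_j$, $1\le j\le 5$, as a strong induced subgraph with attachment set $S_j$; in this case $G$ may be large and need not be fully cycle extendable, so weak pancyclicity has to be argued directly.

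In that last case my approach is a downward reduction from a longest cycle. Let $C$ be a longest cycle of $G$, $c=|C|=c(G)$; it suffices to show that for every $\ell$ with $4\le\ell\le c$ a cycle of length $\ell$ forces a cycle of length $\ell-1$, for then iterating from $c$ down to $3$ fills in all intermediate lengths. Given a cycle $D$ of length $\ell$: if $D$ has a chord joining two vertices at distance $2$ along $D$, cutting off the resulting triangle leaves a cycle of length $\ell-1$. If $D$ has no such chord, fix $v\in V(D)$ with $D$-neighbours $u,w$; then $u\nsim w$ but both lie in $\langle N(v)\rangle$, which is connected and, since $\xi(v)\ge\tfrac12$ and $\deg v\le 6$, dense. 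Taking a shortest $u$--$w$ path inside $\langle N(v)\rangle$ and rerouting $D$ through a neighbour of $v$ so as to skip exactly one vertex of $D$ should yield a cycle of length $\ell-1$; here $\deg v\le 6$ keeps the casework finite, and the $H_j$'s cannot obstruct the reduction because their non-attachment vertices, being incident only with edges of $H_j$, can meet a cycle only through the small attachment set $S_j$.

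The step I expect to be the main obstacle is exactly this reduction in the chordless subcase: making the rerouting argument watertight --- in particular guaranteeing that it never fails under our hypotheses, and checking that no $H_j$- or $F_i$-fragment interferes --- will require a finite but delicate case analysis in the spirit of the proof of Theorem~\ref{t1}, driven by $\Delta\le 6$, local connectedness and the clustering bound $\xi(G)\ge\tfrac12$.
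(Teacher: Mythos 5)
Your handling of $2\le\Delta\le 4$, of the fully cycle extendable case, and of the four exceptional graphs $F_i$ matches the paper. The problem is the one case that actually carries the content of the corollary: when $G$ contains some $H_j$ as a strong induced subgraph with attachment set $S_j$. There your argument is only a sketch of an unproven claim, namely that in these graphs every cycle of length $\ell\ge 4$ forces a cycle of length $\ell-1$. The rerouting step you describe does not obviously do this: if $D$ has no short chord at $v$, a shortest $u$--$w$ path in $\langle N(v)\rangle$ may have length two through an \emph{off-cycle} vertex $x$, and replacing $uvw$ by $uxw$ exchanges $v$ for $x$ and leaves the length unchanged; longer paths inside $\langle N(v)\rangle$ may reuse vertices of $D$ or lengthen the cycle. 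You acknowledge that making this watertight needs a ``finite but delicate case analysis,'' but that analysis is exactly what is missing, so the proof is incomplete precisely where it matters; moreover such a downward reduction is essentially equivalent in strength to the corollary itself (girth $3$ plus ``$\ell\Rightarrow\ell-1$'' down from $c(G)$ is weak pancyclicity), so nothing has been reduced to something easier.

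The paper closes this case much more cheaply, and the idea you are missing is structural: each $H_j$, as a strong induced subgraph with the prescribed attachment set, contains a non-attachment vertex of degree $2$ in $G$. Since $G$ is locally connected, the two neighbours of any degree-$2$ vertex $v$ are adjacent, so every cycle through $v$ can be shortcut and $c(G-v)\in\{c(G),c(G)-1\}$; also $G-v$ remains connected, locally connected, with $\Delta\le 6$ and $\xi\ge 1/2$. One then inducts on the order of $G$ (with small-order base cases checked directly, where a non-extendable $G$ is itself one of $H_1,H_2,H_3$): $G-v$ is weakly pancyclic with girth $3$, which together with the longest cycle of $G$ yields all lengths from $3$ to $c(G)$. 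If you want to salvage your write-up, replace your longest-cycle reduction in the $H_j$ case by this deletion-plus-induction argument; as it stands, that case is a genuine gap.
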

\begin{proof} We have already observed that this is the case if $2 \le \Delta \le 4$. Let $G$ have maximum degree $5$ or $6$. By Theorem \ref{t1}, $G$ is either fully cycle extendable, or $G$ is isomorphic to an $F_i$ for $i \in \{1,2,3,4\}$, or $G$ contains $H_i$ as a strong induced subgraph with attachment set $S_i$ for $i \in \{1,2,3,4,5\}$. We proceed by induction on the order of $G$. Let $G$ be a graph of order 6 or 7 that is connected, locally connected  with $\Delta =5$ or $6$, respectively and minimum clustering coefficient at least 1/2. If $G$ is fully cycle extendable, then $G$ is weakly pancyclic. We observe that $F_1$ and $F_2$ are weakly pancyclic and that $G$ is not isomorphic to $F_i$ for $i \in \{3,4\}$.  So if $G$ is not fully cycle extendable, then $G$ contains $H_i$ as strong induced subgraph with attachment set $S_i$ for some $i \in \{1,2,3\}$ and hence $G$ is isomorphic to $H_i$ for some $i \in \{1,2,3\}$. In each case $G$ is readily seen to be weakly pancyclic.

Suppose now that $G$ is a graph of order $n > 7$ and that every connected locally connected graph of order $k$, $5 \le k <n$, and maximum degree $\Delta$ where $ 2 \le \Delta \le 6$ and minimum clustering coefficient at least 1/2,  is weakly pancyclic. Let $G$ be a connected locally connected graph with $5 \le \Delta \le 6$ and minimum clustering coefficient at least 1/2. If $G$ is fully cycle extendable, then $G$ is weakly pancyclic. Moreover if $G$ is isomorphic to $F_i$ for some $i$, $ i \in \{3,4\}$, then $G$ is readily seen to be weakly pancyclic. Assume thus that $G$ contains an $H_i$ with attachment set $S_i$ as strong induced subgraph. Then $G$ contains a vertex $v$ of degree $2$. It can be shown in a straightforward manner that $G-v$ is a connected locally connected graph with $4 \le \Delta \le 6$ and minimum clustering coefficient at least 1/2. So $G-v$ is weakly pancyclic. Moreover, the circumference of $G-v$ is either $c(G)$ or $c(G) -1$ since the neighbours of every vertex of degree 2 are necessarily adjacent. Since the girth of both $G$ and $G-v$ is $3$, the result now follows.
\end{proof}

\medskip

\noindent{\bf Remark:} The conclusion of Corollary \ref{weakly_pancyclic} still holds if $G$ is disconnected and each component of $G$ has order at least $3$.

\section{Concluding Remarks}
In this paper we added more supporting evidence to Ryj\'{a}\v{c}ek's conjecture: which states that every locally connected graph is weakly pancyclic. We showed that every locally connected graph with minimum clustering coefficient at least 1/2 and maximum degree at most $6$ is weakly pancyclic. Indeed we showed that these graphs, in general, have an even richer cycle structure.  We obtained a complete characterization (in terms of a family of strong induced subgraphs) of these graphs that are fully cycle extendable. It remains an open problem to determine whether the problem, of deciding if a locally connected graph with minimum clustering coefficient at least 1/2 is hamiltonian, is NP-complete. Hendry \cite{H1} conjectured that all hamiltonian chordal graphs are fully cycle extendable. This was shown to be true for several subclasses of the chordal graphs, see \cite{AS, CFGJ}. Recently, however, it was shown in \cite{LS} that this conjecture is not true. These results, and the fact that all hamiltonian locally connected graphs with minimum clustering coefficient at least 1/2 and maximum degree at most $6$ are fully cycle extendable,  prompt the question: which hamiltonian locally connected graphs with minimum clustering coefficient at least 1/2 are fully cycle extendable?

\end{document}